\def\rank{{\textrm{rank}}}
\def\Aut{{\rm Aut}}
\def\End{{\rm End}}
\def\Ker{{\rm Ker}}
\def\Pic{{\rm Pic}}
\def\rank{{\rm rank}}
\def\Tr{{\rm Tr}}
\theoremstyle{plain}
\newtheorem{introtheorem}{Theorem}
\newtheorem{theorem}{Theorem}[section]
\newtheorem{proposition/example}[theorem]{Proposition/Example}
\newtheorem{proposition}[theorem]{Proposition}
\newtheorem{corollary}[theorem]{Corollary}
\newtheorem{lemma}[theorem]{Lemma}
\theoremstyle{definition}
\newtheorem{definition}[theorem]{Definition}
\newtheorem{remark}[theorem]{Remark}
\newtheorem{application}[theorem]{Application}
\newtheorem{conjecture/question}[theorem]{Conjecture/Question}
\newtheorem{remark/definition}[theorem]{Remark/Definition}
\newtheorem{definition/notation}[theorem]{Definition/Notation}
\numberwithin{equation}{section}
\begin{document}
\title{\textbf{Generalized Deligne--Hitchin Twistor Spaces: Construction and Properties}}

\author{Zhi Hu}

\address{ \textsc{School of Mathematics and Statistics, Nanjing University of Science and Technology, Nanjing 210094, China}}

\email{halfask@mail.ustc.edu.cn}

\author{Pengfei Huang}

\address{ Current address: \textsc{Max Planck Institute for Mathematics in the Sciences, Inselstra\ss e 22, 04103 Leipzig, Germany}}

\email{pfhwangmath@gmail.com}

\author{Runhong Zong}

\address{ \textsc{School of Mathematics, Nanjing University, Nanjing  210093, China}}
\email{rzong@nju.edu.cn}

\subjclass[2020]{14D21, 14H60,  53C07, 53C28, 08A35}

\keywords{Moduli space, Twistor space, Deligne isomorphism, De Rham section, Balanced metric, Automorphism group}
\date{}

\begin{abstract}
In this paper, we generalize the construction of Deligne--Hitchin twistor space by gluing two certain Hodge moduli spaces. We investigate some properties of such generalized Deligne--Hitchin twistor space as a complex analytic manifold. More precisely, we show it admits holomorphic sections whose normal bundle contains a semistable subbundle with positive degree and whose energy is semi-negative, and it carries a balanced metric. Moreover, we also study the automorphism groups of the Hodge moduli spaces and the generalized Deligne--Hitchin twistor spaces.
\end{abstract}

\maketitle

\tableofcontents
\section{Introduction}
The twistor theory originally comes from Penrose's non-linear graviton construction \cite{Pen}, and later was generalized to hyperK\"ahler manifolds by Hitchin, Karlhede, Lindstr\"{o}m and Ro\v{c}ek \cite{HKLR}. Given a hyperK\"ahler manifold $M$ together with three complex structures $I,J,K:=IJ=-JI$, we have an associated complex manifold $\mathrm{TW}(M)$, called the \emph{twistor space},   which encodes the hyperK\"ahler structure on $M$.
 Topologically it is just the product of $M$ with the 2-sphere $\mathbf{S}^2=\{\overrightarrow{x}=(x_1,x_2,x_3)\in \mathbb{R}^3:x_1^2+x_2^2+x_3^2=1\}$, and it has a tautological complex structure given by the natural complex structure on  $\mathbf{S}^2$ and the complex structure $I_{\overrightarrow{x}}=x_1I+x_2J+x_3K$ on $M\times\{\overrightarrow{x}\}$. Hitchin et. al showed that $\mathrm{TW}(M)$ has the following properties
 \begin{itemize}
   \item  $\mathrm{TW}(M)$ is a holomorphic fibration over $\mathbf{S}^2\simeq\mathbb{P}^1$,
   \item $\mathrm{TW}(M)$ has a real structure $\sigma:\mathrm{TW}(M)\rightarrow\mathrm{TW}(M)$ covering the antipodal map of $\mathbb{P}^1$,
   \item the fibration $\pi:\mathrm{TW}(M)\rightarrow\mathbb{P}^1$ admits a family of global holomorphic sections called the \emph{twistor lines}, these sections have normal bundles holomorphically isomorphic to $\mathcal{O}_{\mathbb{P}^1}(1)^{\oplus\dim_{\mathbb{C}}M}$ (\emph{weight-one property}), and they are real with respect to $\sigma$,
   \item there is a holomorphic section $\omega\in H^0(\mathrm{TW}(M),\Lambda^2\Ker(d\pi)^*\otimes\pi^*\mathcal{O}_{\mathbb{P}^1}(2))$ which induces a holomorphic symplectic form on each fiber $\pi^{-1}(\lambda)$.
 \end{itemize}

A highly non-trivial example of the non-compact hyperK\"ahler manifold is the smooth locus $\mathcal{M}_{\mathrm{sol}}$ of the moduli space of solutions to Hitchin's equations over a smooth compact Riemann surface $X$ (of genus $g\geq2$) \cite{NH}. Endowed with one complex structure it becomes a complex analytic manifold as the moduli space $\mathcal{M}_{\mathrm{Dol}}(X,r)$ (called the \emph{Dolbeault moduli space}) of stable Higgs bundles over $X$ of rank $r$ with the vanishing Chern class, also with another complex structure it is made into a complex analytic manifold as the moduli space $\mathcal{M}_{\mathrm{dR}}(X,r)$ (called the \emph{de Rham moduli space}) of irreducible flat bundles over $X$ of rank $r$. When applying Hitchin's twistor construction for $\mathcal{M}_{\mathrm{sol}}$, we can obtain the twistor space $\mathrm{TW}(\mathcal{M}_{\mathrm{sol}})$.

To give a moduli interpretation of $\mathrm{TW}(\mathcal{M}_{\mathrm{sol}})$, Deligne introduced the notion of $\lambda$-flat bundles \cite{D}, as the interpolation of usual flat bundles and Higgs bundles. This idea was later illustrated and developed by Simpson via his series work \cite{CS7,CS8,CS9,CS10,CS11}. This interpretation can be treated as an elegant application of nonabelian Hodge theory.
More precisely, there is a moduli space $\mathcal{M}_{\mathrm{Hod}}(X,r)$ (called the \emph{Hodge moduli space}) that parametrizes the isomorphism classes of stable $\lambda$-flat bundles over $X$ of rank $r$ with the vanishing  Chern class, it has a fibration over the complex line $\mathbb{C}$ such that the fibers over 0 and 1 are $\mathcal{M}_{\mathrm{Dol}}(X,r)$ and $\mathcal{M}_{\mathrm{dR}}(X,r)$, respectively \cite{CS7,CS8,CS9}. Let $\bar{X}$ be the complex conjugate variety of $X$, then the two Hodge moduli spaces $\mathcal{M}_{\mathrm{Hod}}(X,r)$ and $\mathcal{M}_{\mathrm{Hod}}(\bar{X},r)$ can be glued together through the \emph{Deligne isomorphism}, which is defined via Riemann--Hilbert correspondence. This produces a complex analytic manifold $\mathrm{TW}_{\mathrm{DH}}(X,r)$ as a holomorphic fibration over the Riemann sphere $\mathbb{P}^1$. Simpson showed that $\mathrm{TW}_{\mathrm{DH}}(X,r)$ is analytically isomorphic to $\mathrm{TW}(\mathcal{M}_{\mathrm{sol}})$ \cite{CS7}, and called it the \emph{Deligne--Hitchin twistor space}. In particular, the twistor lines are interpreted as the \emph{preferred sections} via Simpson correspondence.

The theory of $\lambda$-flat bundle and
Deligne--Hitchin twistor space attracts a lot of attention and has a vast development after Deligne--Simpson's work. For example, Mochizuki proved the Kobayashi--Hitchin correspondence for (poly-)stable $\lambda$-flat bundles \cite{TM}. In \cite{BGHL}, Biswas, G\'{o}mez,  Hoffman and  Logares proved the Torelli theorem for the Deligne--Hitchin twistor space $\mathrm{TW}_{\mathrm{DH}}(X,r,\mathcal{O})$ related to the objects with trivial determinant, and later this result was generalized to the case of principal objects with fixed topological type in \cite{BGH}. The holomorphic automorphism group $\Aut(\mathrm{TW}_{\mathrm{DH}}(X,1))$ of rank one Deligne--Hitchin twistor space $\mathrm{TW}_{\mathrm{DH}}(X,1)$ was studied by Biswas and Heller in \cite{BH}, and the connected component $\Aut_0(\mathrm{TW}_{\mathrm{DH}}(X,1))$ that contains the identity automorphism was calculated. Biswas, Heller and R\"oser investigated the real holomorphic sections of $\pi:\mathrm{TW}_{\mathrm{DH}}(X,2,\mathcal{O})\rightarrow\mathbb{P}^1$, in particular, they find some real holomorphic sections that are not preferred sections \cite{BHR}. When $X$ is a quasi-projective curve, Simpson also constructed the  Deligne--Hitchin twistor space for the moduli spaces of parabolic logarithmic flat (Higgs) bundles over $X$ (\cite{CS8} for rank $r=1$, \cite{CS10} for rank $r=2$, \cite{CS11} for any rank $r$).  Recently, some authors also used these notions in the framework of $p$-adic geometry \cite{ag,F,ps}.

In the present paper, we generalize Deligne--Simpson's construction described above.  Roughly speaking, our construction is in fact replacing the fiber $\mathcal{M}_{\mathrm{Dol}}(\bar{X},r)$ of the fibration $\pi: \mathrm{TW}_{\mathrm{DH}}(X,r)\to\mathbb{P}^1$ over the point $\infty\in\mathbb{P}^1$ with the Dolbeault moduli space $\mathcal{M}_{\mathrm{Dol}}(X',r)$ for $X'$ representing a Riemann surface that lies in the Teichm\"{u}ller space $\mathrm{Tei}(\mathcal{X})$ of complex structures on the underlying $C^\infty$-manifold $\mathcal{X}$ of $X$. The resulting holomorphic fibration  $\pi:\mathrm{TW}(X,X';r)\rightarrow \mathbb{P}^1$ is called the \emph{generalized Deligne--Hitchin twistor space}, which is also realized via a suitable holomorphic gluing map that generalizes the Deligne isomorphism. Fixing $X$, the analytic structure of generalized Deligne--Hitchin twistor space depends on the choice of $X'$ in  $\mathrm{Tei}(\mathcal{X})$, which dose not  affect the $C^\infty$-structure. In other words, we obtain a family of non-compact complex analytic manifolds effectively parametrized by $\mathrm{Tei}(\mathcal{X})$.

The notion of  twistor line plays the central role in Hitchin's twistor theory since twistor lines enjoy nice properties (especially weight-one property) and cover the entire twistor space. For the generalized Deligne--Hitchin twistor space, we also introduce the twistor line, but some properties should be given up.
 Unlike usual Deligne--Hitchin twsitor space, for a general Higgs bundle lying in Dolbeault moduli space there may not exist a twistor line through it,  even if  exists,  weight-one property may be broken.
To find the twistor lines in our generalized spaces, we introduce another type of sections, namely \emph{de Rham sections}.
 The construction  heavily depends on Simpson's work on Bialynicki--Birula theory for Hodge moduli spaces \cite{CS9,CW,HH,HP,H}. For a generalized Deligne--Hitchin twistor space,
we show that there is a de Rham section through a given $\mathbb{C}^*$-fixed point lying in Dolbeault moduli space such that this section can be exactly realized as a twistor line (in our sense). In particular, if one picks the $\mathbb{C}^*$-fixed point as a stable vector bundle with zero Higgs field, then the twistor line through it is a positive rational curve with degree $\dim_\mathbb{C}\mathcal{M}_{\mathrm{Hod}}(X,r)$ or $\frac{3}{2}\dim_\mathbb{C}\mathcal{M}_{\mathrm{Hod}}(X,r)$.  Recently, Beck, Heller and R\"oser developed a theory on the energy of the sections of Deligne--Hitchin twistor space in \cite{BHR1}. We apply their theory to de Rham sections, in particular, we show that the energy of de Rham sections is semi-negative.

In the last section, we investigate the automorphism group of the generalized Deligne--Hitchin twistor space. Biswas, Heller and R\"oser  proved that the automorphism group of Deligne--Hitchin twistor space homotopic to the identity maps the fibers to fibers \cite{BHR}. We show that such property is still satisfied for the generalized Deligne--Hitchin twistor space.  We also generalize  Biswas--Heller's work on the automorphism group of rank one Hodge moduli space \cite{BH}  to the higher rank cases.

%\bigskip

In summary, the main results of this paper can be concluded as the followings.

\begin{introtheorem}
Let $\mathrm{TW}(X,X';r)$ be the generalized Deligne--Hitchin twistor space, and $\mathrm{TW}(X,X';r,\mathcal{O})$ be the generalized Deligne--Hitchin twistor space related to the objects with trivial determinant.
\begin{enumerate}
\item (\textbf{Torelli Theorem} [Theorems \ref{tor}]) Let $(X,X^\prime)\in \mathfrak{TD}(\mathcal{X})$ and $(Y,Y^\prime)\in \mathfrak{TD}(\mathcal{Y})$ be  two pairs of Riemann surfaces of genus $g\geq3$,  where  $\mathfrak{TD}(\mathcal{X}):=\frac{\mathrm{Tei}(\mathcal{X})\times \mathrm{Tei}(\mathcal{X})}{\mathrm{TMCG}(\mathcal{X})}$ for the twistor mapping class group $\mathrm{TMCG}(\mathcal{X}):=\frac{ \{(f,g)\in \mathrm{Diff}(\mathcal{X})\times \mathrm{Diff}(\mathcal{X}): f^{-1}\circ g\in \mathrm{Diff}_0(\mathcal{X})\}}{\mathrm{Diff}_0(\mathcal{X})\times \mathrm{Diff}_0(\mathcal{X})}$.
If there is  the analytic isomorphism $\mathrm{TW}(X,X';r,\mathcal{O})\simeq\mathrm{TW}(Y,Y';r,\mathcal{O})$ of the generalized Deligne--Hitchin twistor spaces for $r\geq 2$, then either $X\simeq Y,X'\simeq Y'$, or $X\simeq Y', X' \simeq Y$ in the space $\mathfrak{TD}(\mathcal{X})$.
    \item (\textbf{Twistor Lines} [Theorem \ref{37}, Theorem \ref{mmmmm}, Theorem \ref{39}]) Given  a  fixed point lying in Dolbeault moduli space, there is a twistor line $s$ through it in $T^*\mathrm{TW}(X,X^\prime;r)$ such that 
         \begin{itemize}
            \item the normal bundle $N_s$ contains a subbundle $L\simeq\mathcal{O}_{\mathbb{P}^1}(1)^{\oplus\nu}$ for $\nu\geq g$,
            \item the energy of $s$ is semi-negative .
         \end{itemize}
\item (\textbf{Balanced Metric} [Theorem \ref{balance}]) $\mathrm{TW}(X,X';r)$  admits a balanced metric, i.e. the induced Hermitian form $\omega$ satisfies $d\omega^{\dim_{\mathbb{C}}(\mathcal{M}_{\mathrm{B}}(\mathcal{X},r))}=0$.
%\item (\textbf{Stability of Holomorphic Tangent Bundle} [Theorem \ref{semistable}]) The holomorphic tangent bundle $T\mathrm{TW}(X,X';1)$ of $\mathrm{TW}(X,X';1)$ is stable.
\item (\textbf{Automorphism Groups} [Theorem \ref{auto}]) Let $\Aut_0(\mathrm{TW}(X,X';r))$ be the identity component of the holomorphic automorphism  group $\Aut(\mathrm{TW}(X,X';r))$ of $\mathrm{TW}(X,X';r)$, then each element of $\Aut_0(\mathrm{TW}(X,X';r))$ maps the fibers of the fibration $\pi:\mathrm{TW}(X,X';r)\rightarrow\mathbb{P}^1$ to fibers. Moreover, this group satisfies the following short exact sequence
$$
\mathrm{Id}\longrightarrow\mathfrak{K}\longrightarrow\Aut_0(\mathrm{TW}(X,X';r))\longrightarrow\mathbb{C}^*\longrightarrow\mathrm{Id},
$$
where each element of $\mathfrak{K}$ preserves the fibers of the fibration  $\pi:\mathrm{TW}(X,X';r)\rightarrow\mathbb{P}^1$.
\end{enumerate}
\end{introtheorem}

\bigskip

\noindent\textbf{Acknowledgements}.
The authors would like to thank Professor Carlos Simpson for the kind help, Professor Peter Gothen for telling them about the paper \cite{BGL} on codimension property of the Dolbeault moduli spaces, and Professor Xin Lv and Professor Ruiran Sun for useful discussions on Sect. 5.  The authors would like to express their deep gratitude to the anonymous referee for pointing out some mistakes in earlier versions. Pengfei Huang would like to thank LJAD, Universit\'{e} C\^{o}te d'Azur and Institut f\"{u}r Mathematik, Universit\"{a}t Heidelberg for their kind hospitalities.

\section{Generalized Deligne--Hitchin Twistor Spaces}

\subsection{Constructions}

Throughout the paper, when there is no particular indication, $X$ is always assumed to be a smooth compact Riemann surface of genus $g\geq 2$, and let $K_X:=\mathrm{\Omega}_X^1$ be the canonical line bundle over $X$.

 The underlying $C^\infty$-manifold of $X$ is denoted by $\mathcal{X}$, and we can write $X=(\mathcal{X}, I)$ with  a chosen complex structure $I$ that determines the Riemann surface structure. The Teichm\"{u}ller space  $\mathrm{Tei}(\mathcal{X})$ of  $\mathcal{X}$  is the space $\mathrm{CS}(\mathcal{X})$ of
complex structures on $\mathcal{X}$  modulo diffeomorphisms homotopy
equivalent to the identity, i.e.
\begin{align*}
 \mathrm{Tei}(\mathcal{X})=\mathrm{CS}(\mathcal{X})/\mathrm{Diff}_0(\mathcal{X}),
\end{align*}
where $\textrm{Diff}_0(\mathcal{X})$ being  the identity component of the diffeomorphism group $\textrm{Diff}(\mathcal{X})$,
and the moduli space of complex structures on
$\mathcal{X}$ is  quotient of the Teichm\"{u}ller space under the action of extended mapping class group $\mathrm{Mod}(\mathcal{X})$, i.e.
\begin{align*}
\mathcal{M}(\mathcal{X})=\mathrm{Tei}(\mathcal{X})/ \mathrm{Mod}(\mathcal{X}),
\end{align*}
where
 \begin{align*}
  \mathrm{Mod}(\mathcal{X}):=\pi_0(\mathrm{Diff}(\mathcal{X}))=\mathrm{Diff}(\mathcal{X})/\mathrm{Diff}_0(\mathcal{X}),
 \end{align*}
is  defined as the group of  isotopy classes of all diffeomorphisms of $\mathcal{X}$.

The fundamental group of $\mathcal{X}$ is given by
$$
\pi_1(\mathcal{X})=\bigg\langle\alpha_1,\beta_1,\cdots,\alpha_g,\beta_g:\prod_{i=1}^g\alpha_i\beta_i\alpha_i^{-1}\beta_i^{-1}=1\bigg\rangle.
$$
 Let  $\mathrm{Inn}(\pi_1(\mathcal{X}))$  be the subgroup  of $\Aut(\pi_1(\mathcal{X}))$ that consists of inner automorphisms of $\pi_1(\mathcal{X})$, and then define the outer automorphism group
\begin{align*}
 \Gamma_\mathcal{X}:=\mathrm{Out}(\pi_1(\mathcal{X}))=\Aut(\pi_1(\mathcal{X}))/\mathrm{Inn}(\pi_1(\mathcal{X}))
\end{align*}
 The beautiful Dehn--Nielsen--Baer theorem  equals the  topologically defined group, $ \mathrm{Mod}(\mathcal{X})$, with an
algebraically defined group, $\Gamma_\mathcal{X}$ \cite{FM2012}.

\hspace*{\fill} \\
 \noindent  \emph{\textbf{Deligne's approach: via generalized Delgine isomorphisms} }
\hspace*{\fill} \\

The space of representations of $\pi_1(\mathcal{X})$ into $G=\mathrm{GL}(r,\mathbb{C})$ is denoted by $R(\mathcal{X},G)$, which contains a subspace $R_{\mathrm{irr}}(\mathcal{X},G)$ consisting of irreducible representations, and then the \emph{Betti moduli space} is defined as
$$
\mathcal{M}_\mathrm{B}(\mathcal{X},r)=R_{\mathrm{irr}}(\mathcal{X},G)/G,
$$
namely the (geometric) quotient space of $R_{\mathrm{irr}}(\mathcal{X},G)$ by the conjugate action of $G$. It's known that $R_{\mathrm{irr}}(\mathcal{X},G)$ is a quasi-affine  variety by the embedding
$$
R_{\mathrm{irr}}(\mathcal{X},G)\hookrightarrow G^{2g},\quad
\rho\mapsto (\rho(\alpha_1),\rho(\beta_1),\cdots,\rho(\alpha_g),\rho(\beta_g)),
$$
and $\mathcal{M}_\mathrm{B}(\mathcal{X},r)$ is a smooth quasi-projective  variety.
Since  $\mathrm{Inn}(\pi_1(\mathcal{X}))$  preserves the $G$-orbits on $R_{\mathrm{irr}}(\mathcal{X},G)$, the outer automorphism group $
 \Gamma_\mathcal{X}$
 acts on $M_\mathrm{B}(\mathcal{X},G)$.

By Riemann--Hilbert correspondence, $\mathcal{M}_\mathrm{B}(\mathcal{X},r)$ is complex analytically isomorphic to the moduli space $ \mathcal{M}_{\mathrm{dR}}(X,r) $ of irreducible flat bundles over $X$ of rank $r$.  For the Riemann surface $X'=(\mathcal{X},I')\in\mathcal{M}(\mathcal{X})$ given by another complex structure $I'$, we obviously have the complex analytic isomorphisms
$$
\mathcal{M}_{\mathrm{dR}}(X',r)\simeq\mathcal{M}_{\mathrm{dR}}(X,r)\simeq\mathcal{M}_\mathrm{B}(\mathcal{X},r).
$$
By nonabelian Hodge correspondence \cite{CS4,CS5,CS6}, $\mathcal{M}_\mathrm{B}(\mathcal{X},r)$ is diffeomorphic to the moduli space $ \mathcal{M}_{\mathrm{Dol}}(X,r) $ of stable Higgs bundles over $X$ of rank $r$ with the vanishing Chern class. As a complex analytic manifold, the complex structure of $ \mathcal{M}_{\mathrm{Dol}}(X,r) $ is inherited from that of $X$.

As an interpolation of flat bundles and Higgs bundles, Deligne introduced the notion of $\lambda$-flat bundles \cite{D}, which is illustrated by Simpson in \cite{CS7} and further studied in \cite{CS8,CS9}. Let's recall some general definitions here.

\begin{definition}[\cite{CS7,TM}]
Let $X$ be a smooth complex projective variety of dimension $n$, and assume $\lambda\in\mathbb{C}$.
\begin{enumerate}
  \item Let $E$ be a holomorphic vector bundle over $X$ with the holomorphic structure $\bar\partial_E$.
A \emph{ $\lambda$-connection} on $E$ is a $\mathbb{C}$-linear map $D^\lambda: E\to E\otimes_{\mathcal{O}_X} \mathrm{\Omega}_X^{1}$\footnote{We also denote $D^1$ by $\nabla$.} that satisfies the following $\lambda$-twisted Leibniz rule:
$$
D^\lambda(fs)=fD^\lambda s+\lambda s\otimes  df,
$$
where $f$ and $s$ are holomorphic sections of $\mathcal{O}_X$ and $E$, respectively. It naturally extends to a map $D^\lambda: E\otimes_{\mathcal{O}_X} \mathrm{\Omega}_X^{p}\to E\otimes_{\mathcal{O}_X} \mathrm{\Omega}_X^{p+1}$ for any integer $p\geq0$. If $D^\lambda\circ D^\lambda=0$, then we call $D^\lambda$ a  \emph{flat $\lambda$-connection} and the pair $(E,D^\lambda)$ is called a  \emph{$\lambda$-flat bundle}.
  \item Let $L$ be a fixed ample line bundle over $X$. A  $\lambda$-flat bundle $(E,D^\lambda)$ over $X$ is called \emph{stable} (resp. \emph{semistable}) if for any $\lambda$-flat coherent subsheaf $(V,D^\lambda|_V)$ of $0<\rank(V)<\rank(E)$, we have the following inequality
$$
\mu_L(V)< (\textrm{resp.} \leq) \,\mu_L(E),
$$
where $\mu_L(\bullet)=\frac{\deg(\bullet)}{\rank(\bullet)}=\frac{\int_Xc_1(\bullet)\wedge c_1(L)^{n-1}}{\rank(\bullet)}$ denotes the \emph{slope} of bundle/sheaf with respect to $L$, and we call $(E,D^\lambda)$ is \emph{polystable} if it decomposes as a direct sum of stable $\lambda$-flat bundles with the same slope.
\end{enumerate}
\end{definition}

Let $\mathbf{M}_{\mathrm{Hod}}(X,r)$ be the moduli stack of
$\lambda$-flat (varying $\lambda$  in $\mathbb{C}$) bundles over $X$ of rank $r$ with the vanishing  Chern class,  and let  $\mathbb{M}_{\mathrm{Hod}}(X,r)$ be the coarse moduli space for the semistable stratum of this stack. It's known that  $\mathbb{M}_{\mathrm{Hod}}(X,r)$ is a quasi-projective variety which parametrizes the isomorphism classes of  polystable $\lambda$-flat bundles \cite{CS7}. Moreover, let $\mathcal{M}_{\mathrm{Hod}}(X,r)$ be the moduli space that parametrizes the isomorphism classes of  stable $\lambda$-flat bundles, then it is smooth and is a Zariski dense open subset of $\mathbb{M}_{\mathrm{Hod}}(X,r)$. Obviously, there is a natural fibration $\pi: \mathcal{M}_{\mathrm{Hod}}(X,r)\rightarrow\mathbb{C}, (E,D^\lambda)\mapsto\lambda$\footnote{Throughout the paper, when there is no ambiguity, we use the same notation for a representative and its equivalence class as a point in certain moduli space, for example, expressions like ``$(E,D^\lambda)\in\mathcal{M}_{\mathrm{Hod}}(X,r)$'' and ``$(E_1,D_1^\lambda)=(E_2,D_2^\lambda)$'' are used in this sense.}.  In particular,  $\pi^{-1}(0)=\mathcal{M}_{\mathrm{Dol}}(X,r)$,  and  $\pi^{-1}(\lambda)=:\mathcal{M}_\lambda(X,r)$  is algebraically isomorphic to $\mathcal{M}_{\mathrm{dR}}(X,r)$ for any $\lambda\in\mathbb{C}^*$.

\begin{definition}
The \emph{twistor datum} on $\mathcal{X}$ is a triple $\mathbf{TD}_\mathcal{X}=(I,I',\gamma)$ consisting of two complex structures $I,I'$ on $\mathcal{X}$
and $\gamma\in \mathrm{Diff}(\mathcal{X})$. Two twistor data $\mathbf{TD}^1_\mathcal{X}=(I_1,I_1',\gamma_1)$ and $\mathbf{TD}^2_\mathcal{X}=(I_2,I_2',\gamma_2)$ on $\mathcal{X}$ is called \emph{equivalent} if there exit diffeomorphisms $f,g\in \mathrm{Diff}(\mathcal{X})$ such that
\begin{itemize}
  \item $f_*\circ I_1\circ (f^{-1})_*=I_2$,
  \item $g_*\circ I'_1\circ (g^{-1})_*=I'_2$
  \item $f^{-1}\circ\gamma_2^{-1}\circ g\circ \gamma_1\in \mathrm{Diff}_0(\mathcal{X})$,
\end{itemize}
where $\textrm{Diff}_0(\mathcal{X})$ denotes to the identity component of $\mathrm{Diff}(\mathcal{X})$.
\end{definition}

Note that the twistor data $\mathbf{TD}^1_\mathcal{X}=(I,I',\gamma)$ is equivalent to twistor data $\mathbf{TD}^2_\mathcal{X}=(I,I'',\mathrm{Id})$ for $I''=(\gamma^{-1})_*\circ I'\circ \gamma_*$, then we can choose the  gauge fixing $\gamma=\mathrm{Id}$, hence the equivalent classes of  twistor data are parameterized by the space 
  \begin{align*}
   \mathfrak{TD}(\mathcal{X}):=\frac{ CS(\mathcal{X})\times  CS(\mathcal{X})}{\mathrm{TW}(\mathcal{X})}=\frac{\mathrm{Tei}(\mathcal{X})\times \mathrm{Tei}(\mathcal{X})}{\mathrm{TMCG}(\mathcal{X})},
  \end{align*}
 where the subgroup $\mathrm{TW}(\mathcal{X})$ of $\mathrm{Diff}(\mathcal{X})\times \mathrm{Diff}(\mathcal{X})$ is defined by
\begin{align*}
  \mathrm{TW}(\mathcal{X})=\{(f,g)\in \mathrm{Diff}(\mathcal{X})\times \mathrm{Diff}(\mathcal{X}): f^{-1}\circ g\in \mathrm{Diff}_0(\mathcal{X})\},
\end{align*}
and the \emph{twistor mapping class group} $\mathrm{TMCG}(\mathcal{X})$ is defined by
\begin{align*}
 \mathrm{TMCG}(\mathcal{X})=\frac{ \mathrm{TW}(\mathcal{X})}{\mathrm{Diff}_0(\mathcal{X})\times \mathrm{Diff}_0(\mathcal{X})}.
\end{align*}

Note that as sets, $\mathrm{TMCG}(\mathcal{X})$ one-to-one corresponds to the extended mapping class group $ \mathrm{Mod}(\mathcal{X})$, but they have different group structures.

Given a twistor data $\mathbf{TD}_\mathcal{X}=(I,I',\gamma)$, we can construct the generalized Deligne--Hitchin twistor space.
By  Riemann--Hilbert correspondence,
one  glues $\mathcal{M}_{\mathrm{Hod}}(X,r)$ and $\mathcal{M}_{\mathrm{Hod}}(X',r)$ along the overlap $\mathcal{M}_{\mathrm{Hod}}(X,r)\times_{\mathbb{C}}\mathbb{C}^*\simeq \mathcal{M}_{\mathrm{Hod}}(X',r)\times_{\mathbb{C}}\mathbb{C}^*\simeq \mathcal{M}_\mathrm{B}(\mathcal{X},r)\times \mathbb{C}^*$. More precisely,   one defines the gluing map
\begin{align*}
 \mathbf{d}_{\mathbf{TD}_\mathcal{X}}:\mathcal{M}_{\mathrm{Hod}}(X,r)\times_{\mathbb{C}}\mathbb{C}^*\rightarrow \mathcal{M}_{\mathrm{Hod}}(X',r)\times_{\mathbb{C}}\mathbb{C}^*,
\end{align*}
 called the \emph{generalized Deligne isomorphism},  by the following composition
\begin{align*}
  \CD
    \mathcal{M}_{\mathrm{Hod}}(X,r)\times_{\mathbb{C}}\mathbb{C}^*  @>\mathbf{d_{\mathbf{TD}_\mathcal{X}}}>>   \mathcal{M}_{\mathrm{Hod}}(X',r)\times_{\mathbb{C}}\mathbb{C}^* \\
    @V \iota_\lambda VV @AA\iota^{-1}_{\lambda^{-1}}A   \\
   \mathcal{M}_{\mathrm{dR}}(X,r)\times\mathbb{C}^*    & &  \mathcal{M}_{\mathrm{dR}}(X',r)\times\mathbb{C}^*\\
    @V \mathrm{RH} VV @AA\mathrm{RH }A  \\
    \mathcal{ M}_{\mathrm{B}}(\mathcal{X},r)\times\mathbb{C}^* @>\gamma_\sharp>>  \mathcal{ M}_{\mathrm{B}}(\mathcal{X},r)\times\mathbb{C}^*
  \endCD,  \\
\end{align*}
where the isomorphisms $\iota_\lambda, \textrm{RH}, \gamma_\sharp$ are   given by, respectively,
\begin{align*}
 \iota_\lambda:&\ \mathcal{M}_{\mathrm{Hod}}(X,r)\times_{\mathbb{C}}\mathbb{C}^*\rightarrow\mathcal{M}_{\mathrm{dR}}(X,r)\times\mathbb{C}^*\\
   &\ (((E,\bar\partial_E),D^\lambda),\lambda)\mapsto(((E,\bar\partial_E),\lambda^{-1}D^\lambda),\lambda);\\
  \mathrm{RH}:&\  \mathcal{M}_{\mathrm{dR}}(X,r)\times\mathbb{C}^*\rightarrow \mathcal{ M}_{\mathrm{B}}(\mathcal{X},r)\times\mathbb{C}^*\\
  & \ (((E,\bar\partial_E),\nabla),\lambda)\mapsto(\rho,\lambda)\\& \ \ \  \textrm{ for } \rho\textrm{ being a representation of } \pi_1(\mathcal{X}) \textrm{ determined by Riemann-Hilbert correpondence} ;\\
  \gamma_\sharp:&\  \mathcal{ M}_{\mathrm{B}}(\mathcal{X},r)\times\mathbb{C}^*\rightarrow \mathcal{ M}_{\mathrm{B}}(\mathcal{X},r)\times\mathbb{C}^*\\
 &\ (\rho,\lambda)\mapsto (\rho\circ (\gamma^{-1})_*,\lambda^{-1}).
\end{align*}
Since  the generalized Deligne isomorphism $\mathbf{d}_{\mathbf{TD}_\mathcal{X}}$ covers the map $\mathbb{C}^*\rightarrow\mathbb{C}^*,\lambda\mapsto \lambda^{-1}$, the resulting analytic manifold, denoted by $\mathrm{TW}(\mathbf{TD}_\mathcal{X};r)$, forms a   fibration $\pi:\mathrm{TW}(X,X';r)\rightarrow\mathbb{P}^1$ with the fibers $\pi^{-1}(\lambda)$ complex analytically isomorphic to $\mathcal{M}_\mathrm{B}(\mathcal{X},r)$ for $\lambda\in \mathbb{P}^1\backslash\{0,\infty\}$ and the fibers $\pi^{-1}(0)=\mathcal{M}_{\mathrm{Dol}}(X,r), \pi^{-1}(\infty)=\mathcal{M}_{\mathrm{Dol}}(X',r)$.
We will call $\mathrm{TW}(\mathbf{TD}_\mathcal{X};r)$ the \emph{generalized Deligne--Hitchin twistor space} associated to the twistor datum $\mathbf{TD}_\mathcal{X}$.

\begin{theorem}\label{23}
If the twistor data $\mathbf{TD}^1_\mathcal{X}=(I_1,I_1',\gamma_1)$ is equivalent to $\mathbf{TD}^2_\mathcal{X}=(I_2,I_2',\gamma_2)$, then the generalized Deligne--Hitchin twistor spaces $\mathrm{TW}(\mathbf{TD}^1_\mathcal{X};r)$ is analytically isomorphic to $\mathrm{TW}(\mathbf{TD}^2_\mathcal{X};r)$.
\end{theorem}

\begin{proof}Denote $X_1=(\mathcal{X},I_1),  X_1'=(\mathcal{X},I_1'), X_2=(\mathcal{X},I_2), X_2'=(\mathcal{X},I_2')$. Since $f_*\circ I_1\circ (f^{-1})_*=I_2, g_*\circ I'_1\circ (g^{-1})_*=I'_2$ for diffeomorphisms $f,g\in \textrm{Diff}(\mathcal{X})$, namely, as  complex manifolds, $X_1$ and $X'_1$ are analytically isomorphic to $X_2$ and $X'_2$, respectively, the  biholomorphic equivalences $f: X_1\rightarrow X_2,g: X_1'\rightarrow X_2'$ induce the isomorphisms between moduli spaces $\mathcal{M}_{\mathrm{Hod}}(X_1,r)$ and $\mathcal{M}_{\mathrm{Hod}}(X_2,r)$, $\mathcal{M}_{\mathrm{Hod}}(X'_1,r)$ and $\mathcal{M}_{\mathrm{Hod}}(X'_2,r)$, respectively,  via pullbacks on $\lambda$-flat bundles. To show $\mathrm{TW}(\mathbf{TD}^1_\mathcal{X};r)$ is analytically isomorphic to $\mathrm{TW}(\mathbf{TD}^2_\mathcal{X};r)$, we only need to check  the commutativity of the  following    diagram
\begin{align}\label{ds}
 \CD
 \mathcal{M}_{\mathrm{Hod}}(X_1,r)\times_\mathbb{C}\mathbb{C}^*   @>\mathbf{d}_{\mathbf{TD}^1_\mathcal{X}}>> \mathcal{M}_{\mathrm{Hod}}(X'_1,r)\times_\mathbb{C}\mathbb{C}^*  \\
   @V (f^{-1})^* VV @V (g^{-1})^* VV  \\
  \mathcal{M}_{\mathrm{Hod}}(X_2,r)\times_\mathbb{C}\mathbb{C}^*   @>\mathbf{d}_{\mathbf{TD}^2_\mathcal{X}}>> \mathcal{M}_{\mathrm{Hod}}(X'_2,r)\times_\mathbb{C}\mathbb{C}^* \endCD.
\end{align}

By definition, we  can explicitly express the generalized Deligne isomorphisms  \begin{align*}
   \mathbf{d}_{\mathbf{TD}^i_\mathcal{X}}:\mathcal{M}_{\mathrm{Hod}}(X_i,r)\times_{\mathbb{C}}\mathbb{C}^*&\rightarrow \mathcal{M}_{\mathrm{Hod}}(X'_i,r)\times_{\mathbb{C}}\mathbb{C}^*\\
  (((E,\bar{\partial}_E),D^\lambda),\lambda)&\mapsto(((E,((\gamma_i^{-1})^*[\lambda^{-1}D^{\lambda}+\bar{\partial}_E])_{X_i'}^{0,1}),\lambda^{-1}((\gamma_i^{-1})^*[\lambda^{-1}D^{\lambda}+\bar{\partial}_E])_{X_i'}^{1,0}),\lambda^{-1}),
 \end{align*}
 where $i=1,2$,  $(\bullet)^{1,0}_{X_i^\prime}$ and $(\bullet)^{0,1}_{X_i'}$ denote the corresponding (1,0)-part and (0,1)-part  with respect to the complex structure $I_i'$ of $X_i'$, respectively. Therefore, we have
 \begin{align*}
   &(g^{-1})^*\circ\mathbf{d}_{\mathbf{TD}^1_\mathcal{X}}(((E,\bar{\partial}_E),D^\lambda),\lambda)\\
   =&\ (((E,(g^{-1})^*(((\gamma_1^{-1})^*[\lambda^{-1}D^{\lambda}+\bar{\partial}_E])_{X_1'}^{0,1})),\lambda^{-1}(g^{-1})^*(((\gamma_1^{-1})^*[\lambda^{-1}D^{\lambda}+\bar{\partial}_E])_{X_1'}^{1,0})),\lambda^{-1}),\\
   &\mathbf{d}_{\mathbf{TD}^2_\mathcal{X}}\circ(g^{-1})^*(((E,\bar{\partial}_E),D^\lambda),\lambda)\\
  =&\  (( (E,((\gamma_2^{-1})^*\circ(f^{-1})^*[\lambda^{-1}D^{\lambda}+\bar{\partial}_E])_{X_2'}^{0,1}),\lambda^{-1}((\gamma_2^{-1})^*\circ(f^{-1})^*[\lambda^{-1}D^{\lambda}+\bar{\partial}_E])_{X_2'}^{1,0}), \lambda^{-1})\\
 =&\  (( (E,(g^{-1})^*((g^*\circ(\gamma_2^{-1})^*\circ(f^{-1})^*[\lambda^{-1}D^{\lambda}+\bar{\partial}_E])_{X_1'}^{0,1})),\\
 &\ \ \ \lambda^{-1}(g^{-1})^*((g^*\circ(\gamma_2^{-1})^*\circ(f^{-1})^*[\lambda^{-1}D^{\lambda}+\bar{\partial}_E])_{X_1'}^{1,0}), \lambda^{-1}).
 \end{align*}
 Since $f^{-1}\circ\gamma_2^{-1}\circ g\circ \gamma_1\in \mathrm{Diff}_0(\mathcal{X})$, namely, the diffeomorphisms $f^{-1}\circ\gamma_2^{-1}\circ g$ and $\gamma_1$ lie in the same connected component of $\mathrm{Diff}(\mathcal{X})$, due to Dehn--Nielsen--Baer theorem,  the induced action of $f^{-1}\circ\gamma_2^{-1}\circ g$ on the moduli space $\mathcal{ M}_{\mathrm{B}}(\mathcal{X},r)$ coincide with the induced action of $\gamma_1^{-1}$, which implies that $(g^{-1})^*\circ\mathbf{d}_{\mathbf{TD}^1_\mathcal{X}}(((E,\bar{\partial}_E),D^\lambda),\lambda)$ and $\mathbf{d}_{\mathbf{TD}^2_\mathcal{X}}\circ(g^{-1})^*(((E,\bar{\partial}_E),D^\lambda),\lambda)$
   represent the same point lying in  $\mathcal{M}_{\mathrm{Hod}}(X'_2,r)\times_\mathbb{C}\mathbb{C}^*$, i.e. the  diagram \eqref{ds} is commutative.
\end{proof}

\begin{remark}\
\begin{enumerate}
               \item Given $r$, the $C^\infty$-structures on generalized Deligne--Hitchin twistor spaces are independent of the choices of twistor data. Indeed, by nonabelian Hodge correspondence, there are $C^\infty$-diffeomorphisms
\begin{align*}
  \mathcal{M}_{\mathrm{Hod}}(X,r)\simeq \mathcal{M}_{\mathrm{Hod}}(X',r)\simeq \mathcal{M}_{\mathrm{B}}(\mathcal{X},r)\times\mathbb{C},
\end{align*}
under which, the generalized Deligne isomorphism identifies $(\rho,\lambda)$ with $(\rho,\lambda^{-1})$. Thus we get a global $C^\infty$-trivialization $\mathrm{TW}(\mathbf{TD}_\mathcal{X};r)\simeq \mathcal{M}_\mathrm{B}(\mathcal{X},r)\times \mathbb{P}^1$.
 \item From now on,  we only need to consider the generalized Deligne--Hitchin twistor space associated to the twistor data $\mathbf{TD}_\mathcal{X}=(I,I',\mathrm{Id})$, which
is also denoted by $\mathrm{TW}(X,X';r)$, and for this case, the generalized Deligne isomorphism is simply denoted by $\mathbf{d}$.
              \end{enumerate}
\end{remark}

\hspace*{\fill} \\
 \noindent\emph{\textbf{Hitchin's approach: via  hypercomplex structures}}
\hspace*{\fill} \\

One identifies  $\mathbb{P}^1$ with the sphere $\mathbf{S}^2=\{\overrightarrow{x}=(x_1,x_2,x_3)\in \mathbb{R}^3:x_1^2+x_2^2+x_3^2=1\}$  via the stereographic projections
 \begin{align*}
     p_\mathbf{N}: \mathbf{N}=\mathbf{S}^2\backslash\{(1,0,0)\}&\longrightarrow \mathbb{C}\\
     (x_1,x_2,x_3)&\mapsto \frac{x_2+\sqrt{-1}x_3}{1-x_1}
     \end{align*}
 with the inverse
 \begin{align*}
   p_\mathbf{N}^{-1}: \mathbb{C}&\longrightarrow \mathbf{N}\\
   \lambda&\mapsto \bigg(\frac{|\lambda|^2-1}{|\lambda|^2+1}, \frac{2\mathrm{Re}\lambda}{|\lambda|^2+1},\frac{2\mathrm{Im}\lambda}{|\lambda|^2+1}\bigg),
 \end{align*}
 and
 \begin{align*}
     p_\mathbf{S}: \mathbf{S}=\mathbf{S}^2\backslash\{(-1,0,0)\}&\longrightarrow \mathbb{C}\\
     (x_1,x_2,x_3)&\mapsto \frac{x_2-\sqrt{-1}x_3}{1+x_1}
     \end{align*}
 with the inverse
 \begin{align*}
   p_\mathbf{S}^{-1}: \mathbb{C}&\longrightarrow \mathbf{S}\\
   \lambda&\mapsto \bigg(\frac{1-|\lambda|^2}{|\lambda|^2+1}, \frac{2\mathrm{Re}\lambda}{|\lambda|^2+1},-\frac{2\mathrm{Im}\lambda}{|\lambda|^2+1}\bigg).
 \end{align*}
 On the other hand, let $\widetilde{\mathcal{M}}_{\mathrm{Dol}}(X,r)$ be the underlying $C^\infty$-manifold of the moduli space $\mathcal{M}_{\mathrm{Dol}}(X,r)$, it is well-known that  $\widetilde{\mathcal{M}}_{\mathrm{Dol}}(X,r)$ carries a hypercomplex (hyperK\"{a}hler) structure consisting of three complex structures $\mathbb{I},\mathbb{J},\mathbb{K}$ satisfy the quaternionic relation $\mathbb{I}\mathbb{J}=-\mathbb{J}\mathbb{I}=\mathbb{K}$. In particular, $\mathbb{I}$ comes from the complex structure of $X$ such that $(\widetilde{\mathcal{M}}_{\mathrm{Dol}}(X,r),\mathbb{I})$ is complex analytically  isomorphic to $\mathcal{M}_{\mathrm{Dol}}(X,r)$, and $\mathbb{J}$ comes from the complex structure of the complex Lie group $G$ such that $(\widetilde{\mathcal{M}}_{\mathrm{Dol}}(X,r),\mathbb{J})$ is complex analytically isomorphic to $\mathcal{M}_{\mathrm{dR}}(X,r)$. Similarly,  $\widetilde{\mathcal{M}}_{\mathrm{Dol}}(X',r)$ (that is diffeomorphic to $\widetilde{\mathcal{M}}_{\mathrm{Dol}}(X,r)$) also admits three complex structures $\mathbb{I}', \mathbb{J}, \mathbb{K}'=\mathbb{I}'\mathbb{J}=-\mathbb{J}\mathbb{I}'$. For the products $\widetilde{\mathcal{M}}_{\mathrm{Dol}}(X,r)\times \mathbf{S}$ and $\widetilde{\mathcal{M}}_{\mathrm{Dol}}(X',r)\times \mathbf{N}$, we assign the fibers $\widetilde{\mathcal{M}}_{\mathrm{Dol}}(X,r)$ over $\overrightarrow{x}=(x_1,x_2,x_3)\in\mathbf{S}$ and $\overrightarrow{x'}=(x_1',x_2',x_3')\in \mathbf{N}$ with the complex structures (cf. {\cite[Section 3]{CS7}})
 \begin{align*}
  \mathbb{I}_{\overrightarrow{x}}&=x_1\mathbb{I}+x_2\mathbb{J}+x_3\mathbb{K}\\
&=[1-(\mathrm{Re}\lambda) \mathbb{K}+(\mathrm{Im}\lambda) \mathbb{J}]^{-1}\mathbb{I}[1-(\mathrm{Re}\lambda) \mathbb{K}+(\mathrm{Im}\lambda) \mathbb{J}],\\
  \mathbb{I}_{\overrightarrow{x'}}'&=-x'_1\mathbb{I}'+x'_2\mathbb{J}-x'_3\mathbb{K}'\\
&=[1-(\mathrm{Re}\lambda') \mathbb{K}'+(\mathrm{Im}\lambda') \mathbb{J}]^{-1}\mathbb{I}'[1-(\mathrm{Re}\lambda') \mathbb{K}'+(\mathrm{Im}\lambda) \mathbb{J}],
 \end{align*}
respectively, where $\lambda= p_\mathbf{S}(\overrightarrow{x}), \lambda'=p_\mathbf{N}(\overrightarrow{x'})$. By Simpson's result ({\cite[Theorem 4.2]{CS7}}), if  $\overrightarrow{x}=(x_1,x_2,x_3)$ and $\overrightarrow{x'}=(-x_1,x_2,-x_3)$ both lie in $\mathbf{S}\bigcap\mathbf{N}$,  there is a diffeomorphism
$f_{\overrightarrow{x},\overrightarrow{x'}}:\widetilde{\mathcal{M}}_{\mathrm{Dol}}(X,r)\rightarrow\widetilde{\mathcal{M}}_{\mathrm{Dol}}(X',r)$ such that it is a biholomorphic equivalence with respect to the complex structures $\mathbb{I}_{\overrightarrow{x}}$ and $\mathbb{I}_{\overrightarrow{x'}}'$. Moreover, by virtue of these diffeomorphisms, we can glue $\widetilde{\mathcal{M}}_{\mathrm{Dol}}(X,r)\times \mathbf{S}$ and $\widetilde{\mathcal{M}}_{\mathrm{Dol}}(X',r)\times \mathbf{N}$ along the overlap $\mathbf{S}\bigcap\mathbf{N}$ to make the topological product $\widetilde{\mathcal{M}}_{\mathrm{Dol}}(X,r)\times \mathbf{S}^2$ into a holomorphic fibration over $\mathbb{P}^1$ that is exactly analytically isomorphic to the generalized  Deligne--Hitchin twistor space $\mathrm{TW}(X,X';r)$, where the complex structures on fibers have been defined as above and the complex structure on the base is the standard one of $\mathbb{P}^1$.

\subsection{Example: Deligne--Hitchin Twistor Spaces}
A typical example is to pick $  X'=\bar X:=(\mathcal{X},-I)$,  the complex conjugate manifold of $X$ which is generally not biholomorphic to $X=(\mathcal{X},I)$ unless there is a real structure on $X$.
 By  gluing $\mathcal{M}_{\mathrm{Hod}}(X,r)$ and
$\mathcal{M}_{\mathrm{Hod}}(\bar X,r)$ with the Deligne isomorphism $\mathbf{d}$, which is explicitly  given by
\begin{align*}
\mathbf{d}: (((E,\bar{\partial}_E),D^\lambda),\lambda)\mapsto (((E,\lambda^{-1}D^\lambda),\lambda^{-1}\bar{\partial}_E),\lambda^{-1}),
\end{align*}
we obtain the usual \emph{Deligne--Hitchin twistor space} $\mathrm{TW}_{\mathrm{DH}}(X,r)$ \cite{CS7,CS8}.

The  deep theorems due to Hitchin and Simpson (for $\lambda=0$) \cite{NH,CS1}, Corlette (for $\lambda=1$) \cite{cor}, and Mochizuki (for $\lambda\neq0$) \cite{TM} exhibit the existence of harmonic metrics on polystable $\lambda$-flat bundles. Given a Hermitian metric $h$ on a $\lambda$-flat bundle $((E,\bar{\partial}_E),D^\lambda)$, one introduces a (0,1)-operator $\delta_h''$ and an (1,0)-operator $\delta_h'$  by the following conditions
 \begin{align*}
  \lambda\partial h(u,v)&=h(D^\lambda u,v)+h(u,\delta_h''v),\\
  \bar{\partial}h(u,v)&=h(\bar{\partial}_E u,v)+h(u,\delta_h'v),
 \end{align*}
then we define  the following four operators
$$
\begin{aligned}
\partial_h&:=\frac{1}{1+|\lambda|^2}\bigg(\bar{\lambda}D^\lambda+\delta_h'\bigg),\ \
\bar{\partial}_h:=\frac{1}{1+|\lambda|^2}\bigg(\bar{\partial}_E+\lambda\delta_h''\bigg),\\
\theta_h&:=\frac{1}{1+|\lambda|^2}\bigg(D^\lambda-\lambda\delta_h'\bigg),\ \
\theta_h^\dagger:=\frac{1}{1+|\lambda|^2}\bigg(\bar{\lambda}\bar{\partial}_E-\delta_h''\bigg),
\end{aligned}$$
which satisfy
$$\begin{aligned}
D^\lambda&=\lambda\partial_h+\theta_h,\ \
\bar{\partial}_E=\bar{\partial}_h+\lambda\theta_h^\dagger,\\
\delta_h'&=\partial_h-\bar{\lambda}\theta_h,\ \
\delta_h''=\bar{\lambda}\bar{\partial}_h-\theta_h^\dagger.
\end{aligned}
$$
It is easy to see that $\mathcal{D}_h=\partial_h+\bar{\partial}_h$ is a unitary connection with respect to the metric $h$, and $\theta_h^\dagger$ is the adjoint of $\theta_h$ in the sense that
$h(\theta_h(u),v)=h(u,\theta_h^\dagger(v))$. When $\lambda\in\mathbb{C}^*$, the harmonicity of the metric $h$ means that  $((E,\bar{\partial}_h),\theta_h)$ is a stable Higgs bundle \cite{TM}.

 We define the  antilinear isomorphism $\sigma:\mathcal{M}_{\mathrm{Hod}}(X,r)\rightarrow \mathcal{M}_{\mathrm{Hod}}(\bar X, r)$  as
 $$
 ((E,\bar{\partial}_E),D^\lambda)\mapsto ((E,\delta_h'),-\delta_h'').
 $$
One easily checks it is  compatible  with the Deligne isomorphism $\mathbf{d}$ in the sense that
$$
\mathbf{d}^{-1}\circ \sigma=\sigma^{-1}\circ\mathbf{d},
$$
 hence we get an antiholomorphic involution $\sigma$ on  $\mathrm{TW}_{\mathrm{DH}}(X,r)$ which covers the antipodal involution $\lambda\mapsto -\bar \lambda^{-1}$ of $\mathbb{P}^1$.
Let $((E,\bar{\partial}_E),\theta)\in \mathcal{M}_{\mathrm{Dol}}(X,r)$ be a stable Higgs bundle with the harmonic metric $h$, then $\{((E,\bar{\partial}_E+\lambda\theta^\dagger_h), \lambda\delta'_h+\theta)\}_{\lambda\in\mathbb{C}}$ defines a holomorphic section of the fibration $\pi:\mathcal{M}_{\mathrm{Hod}}(X,r)\rightarrow \mathbb{C}$. By gluing construction, one obtains a  holomorphic section of the fibration $\pi:\mathrm{TW}_{\mathrm{DH}}(X,r)\rightarrow\mathbb{P}^1$. Such section $s: \mathbb{P}^1\to\mathrm{TW}_{\mathrm{DH}}(X,r)$ described as above  is called the \emph{twistor line} or \emph{preferred section} \cite{CS7,CS8}. These sections are known to have the following nice properties

 \begin{itemize}
\item through each point $((E,\bar{\partial}_E),D^\lambda)\in\mathrm{TW}_{\mathrm{DH}}(X,r)$, there is a unique twistor line;
\item (weight-one property) given a twistor line $s$, its normal bundle $N_s$ is isomorphic to $\mathcal{O}_{\mathbb{P}^1}(1)^{\kappa}$, where $\kappa=\dim_\mathbb{C}\big(\mathcal{M}_{\mathrm{B}}(\mathcal{X},r)\big)=2r^2(g-1)+2$ \cite{HKLR,CS7,CS8};
    \item (reality) twistor line  is $\sigma$-invariant, which can be checked as follows
    \begin{align*}
  \sigma((E,\bar{\partial}_E+\lambda\theta^\dagger_h), \lambda\delta'_h+\theta)=\left\{
                                                                                  \begin{array}{ll}
                                                                                   ((E,\bar{\partial}_E-\bar \lambda^{-1}\theta^\dagger_h), -\bar \lambda^{-1}\delta'_h+\theta), & \hbox{$\lambda\neq 0$,} \\
                                                                                    ((E,\partial_h),\theta_h^\dagger), & \hbox{$\lambda=0$.}
                                                                                  \end{array}
                                                                                \right.
\end{align*}
\end{itemize}

\begin{application}
The following proposition is useful in nonabelian Hodge theory, which can be proved by {\cite[Theorem 5.12]{CS5}} and the separateness of moduli space as what Simpson has done in \cite{CS6}.

\begin{proposition}
Let $\{((E,\bar\partial_i),D^\lambda_i)\}$ be a sequence lying in $\mathcal{M}_\lambda(X,r)$ for some $\lambda\in\mathbb{C}^*$, and assume it has a limit point $((E,\bar\partial_{\infty}),D^\lambda_\infty)$. Denote by $((E,\mathfrak{\bar\partial}_{h_i}),\theta_{h_i})$ and $((E,\bar\partial_{h_\infty}),\theta_{h_\infty})$ the  Higgs bundles corresponding to $((E,\bar\partial_i),D^\lambda_i)$ and $((E,\bar\partial_{\infty}),D^\lambda_\infty)$, respectively. Then the sequence $\{((E,\mathfrak{\bar\partial}_{h_i}),\theta_{h_i})\}$ converge to the point $((E,\bar\partial_{h_\infty}),\theta_{h_\infty})$ in the moduli space $\mathcal{M}_{\mathrm{Dol}}(X,r)$.
\end{proposition}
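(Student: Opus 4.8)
The plan is to view the assignment $((E,\bar\partial_i),D^\lambda_i)\mapsto((E,\bar\partial_{h_i}),\theta_{h_i})$ as the non-abelian Hodge correspondence restricted to the single fibre $\mathcal{M}_\lambda(X,r)$, and to establish its continuity by the compactness-plus-separatedness scheme that Simpson employs in \cite{CS6}. After passing to the subsequence realizing the given limit point, I may assume that $((E,\bar\partial_i),D^\lambda_i)\to((E,\bar\partial_\infty),D^\lambda_\infty)$ in $\mathcal{M}_\lambda(X,r)$; applying the algebraic isomorphism $\iota_\lambda$ transports this to a convergent family of irreducible flat bundles in $\mathcal{M}_{\mathrm{dR}}(X,r)$, each equipped with its harmonic metric $h_i$. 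What must then be shown is that the Higgs bundles $((E,\bar\partial_{h_i}),\theta_{h_i})$ read off from the $h_i$ converge in $\mathcal{M}_{\mathrm{Dol}}(X,r)$ to the Higgs bundle attached to the limit.

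First I would invoke \cite[Theorem 5.12]{CS5} to obtain the decisive precompactness: since the flat bundles remain in a compact region of $\mathcal{M}_{\mathrm{dR}}(X,r)$, the associated harmonic metrics are controlled, and consequently the Higgs bundles $((E,\bar\partial_{h_i}),\theta_{h_i})$ stay in a compact subset of $\mathcal{M}_{\mathrm{Dol}}(X,r)$. Hence some subsequence converges to a polystable Higgs bundle $(E^*,\theta^*)$, and it remains only to identify this limit.

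Next I would run the opposite, analytically elementary, direction. The harmonic metric depends continuously on its Higgs data, so from $(E^*,\theta^*)$ together with its harmonic metric the operators $D^\lambda=\lambda\partial_h+\theta$ produce a $\lambda$-flat bundle that is the limit, in $\mathcal{M}_\lambda(X,r)$, of the same subsequence $((E,\bar\partial_i),D^\lambda_i)$. Because $\mathcal{M}_\lambda(X,r)$ is separated, limits of a convergent sequence are unique, so this reconstructed $\lambda$-flat bundle must coincide with $((E,\bar\partial_\infty),D^\lambda_\infty)$; by injectivity of the correspondence on a fixed fibre this forces $(E^*,\theta^*)=((E,\bar\partial_{h_\infty}),\theta_{h_\infty})$. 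Since every subsequence of $((E,\bar\partial_{h_i}),\theta_{h_i})$ then admits a further subsequence converging to this one point, the full sequence converges to $((E,\bar\partial_{h_\infty}),\theta_{h_\infty})$, as claimed.

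The hard part is the compactness step: a priori a sequence of harmonic bundles can degenerate, with the harmonic metrics---and hence the holomorphic structures $\bar\partial_{h_i}$ and the Higgs fields $\theta_{h_i}$---escaping to the boundary of $\mathcal{M}_{\mathrm{Dol}}(X,r)$. Ruling this out is exactly what \cite[Theorem 5.12]{CS5} supplies; by contrast, the continuity of the metric-to-connection reconstruction and the Hausdorff property of $\mathcal{M}_\lambda(X,r)$ are comparatively soft inputs that serve only to pin down the subsequential limit.
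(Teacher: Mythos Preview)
Your argument is correct and is in fact exactly the approach the paper itself cites as the standard proof: immediately before the proposition the authors remark that it ``can be proved by \cite[Theorem 5.12]{CS5} and the separateness of moduli space as what Simpson has done in \cite{CS6}.'' The proof the paper actually \emph{writes out}, however, is deliberately different---it is presented as an application of twistor geometry. Granting (as you do) that the Higgs bundles have some subsequential limit $((E,\bar\partial'_\infty),\theta'_\infty)$, the paper notes that each $((E,\bar\partial_i),D^\lambda_i)$ sits on a twistor line $s_i$ in $\mathrm{TW}_{\mathrm{DH}}(X,r)$; the weight-one property $N_{s_i}\simeq\mathcal{O}_{\mathbb{P}^1}(1)^{\oplus\kappa}$ gives $H^1(\mathbb{P}^1,N_{s_i})=0$, so by Kodaira's criterion the family of twistor lines deforms unobstructedly to a holomorphic section joining $((E,\bar\partial_\infty),D^\lambda_\infty)$ to $((E,\bar\partial'_\infty),\theta'_\infty)$. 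This limiting section lies in the closure of a $\sigma$-invariant set, hence is itself $\sigma$-invariant, and Simpson's observation \cite{CS7} that near a twistor line every $\sigma$-invariant section is again a twistor line forces $((E,\bar\partial'_\infty),\theta'_\infty)=((E,\bar\partial_{h_\infty}),\theta_{h_\infty})$.

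So your route identifies the subsequential limit via the Hausdorff property of $\mathcal{M}_\lambda(X,r)$ and injectivity of the correspondence, while the paper replaces that step by the package (real structure $\sigma$) $+$ (weight-one normal bundle) $+$ (Kodaira deformation). Your argument is more direct and self-contained; the paper's argument is meant to advertise that the twistor formalism can reprove such continuity statements without invoking separatedness directly.
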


  Now we revisit it by twistor theory. By assumption, we can assume the sequence $\{((E,\bar\partial_i),D^\lambda_i)\}$ lie in a sufficiently small open subset of $\mathcal{M}_\lambda(X,r)$.  Every point $((E,\bar\partial_i),D^\lambda_i)$    gives rise to a preferred section $s_i$ of the Deligne--Hitchin twistor space. The sequence $\{((E,\mathfrak{\bar\partial}_{h_i}),\theta_{h_i})\}$ has a limit point which is denoted by $(( E,\bar\partial'_\infty),\theta'_\infty)$. Since $N_{s_i}\simeq\mathcal{O}_{\mathbb{P}^1}(1)^{\oplus\kappa}$, we have $H^1(\mathbb{P}^1,N_{s_i})=0$, then  by Kodaira's unobstruction  criterion \cite{K},  there is  a holomorphic section joining $((E,\bar\partial_{\infty}),D^\lambda_\infty)$ to $(( E,\bar\partial'_\infty),\theta'_\infty))$ such that it lies in the closure of a $\sigma$-invariant subset. Therefore, this section is also   $\sigma$-invariant. However, as pointed out by Simpson \cite{CS7}, within a small neighborhood of a given preferred section,  every $\sigma$-invariant holomorphic section is also preferred, thus $( (E,\bar\partial^\prime_\infty),\theta^\prime_\infty)=((E,\bar\partial_{h_\infty}),\theta_{h_\infty})$.\qed
\end{application}

\subsection{Cases of Fixed Determinant}

We also similarly consider the moduli spaces of $\lambda$-flat bundles with fixed determinant and the corresponding twistor spaces. We will employ the following notations.
\begin{itemize}
  \item $ \mathcal{M}_\lambda(X,r,\mathcal{O}_X) $ (with fixed $\lambda\in \mathbb{C}^*$): the moduli space of stable pairs $(E, D^\lambda)$, where $E$ is a holomorphic vector bundle over $X$ of rank $r$ together with the isomorphism $\bigwedge^rE\simeq \mathcal{O}_X$, and $D^\lambda$ is a flat $\lambda$-connection on $E$ such that the operator $\bigwedge^rE\rightarrow \bigwedge^rE\otimes K_X$ coincides with $\lambda\cdot d$.
 \item  $ \mathcal{M}_{\mathrm{dR}}(X,r,\mathcal{O}_X) $: the moduli space $ \mathcal{M}_1(X,r,\mathcal{O}_X) $.
 \item $ \mathcal{M}_{\mathrm{Dol}}(X,r,\mathcal{O}_X)$: the moduli space of stable Higgs bundles $(E,\theta)$ consisting of a holomorphic vector bundle $E$ over $X$ of rank $r$ together with the isomorphism $\bigwedge^rE\simeq \mathcal{O}_X$ and an $\mathcal{O}_X$-linear map $\theta: E\rightarrow E\otimes K_X$ with $\Tr(\theta)=0$.
 \item $\mathcal{N}(X,r,\mathcal{O}_X)$: the moduli space  of stable vector bundles over $X$ of rank $r$ together with the isomorphism $\bigwedge^rE\simeq \mathcal{O}_X$
\item $\mathcal{M}_\mathrm{Hod}(X,r,\mathcal{O}_X)$:  the  moduli space of stable $\lambda$-flat ($\lambda\in \mathbb{C}$) bundles over  $X$  of rank $r$ with the vanishing Chern class and  the fixed determinant $\mathcal{O}_X$.
 \item $\mathcal{M}^s_\lambda(X,r,\mathcal{O}_X)$ (with fixed $\lambda\in \mathbb{C}^*$):  the Zariski dense open subset of $\mathcal{M}_\lambda(X,r,\mathcal{O}_X)$ consisting of $\lambda$-flat bundles such that the underlying vector bundles are stable.
 \item $\mathcal{M}^s_\mathrm{Dol}(X,r,\mathcal{O}_X)$:  the Zariski dense open subset of $\mathcal{M}_\mathrm{Dol}(X,r,\mathcal{O}_X)$ consisting of Higgs bundles such that the underlying vector bundles are stable.
 \item $\mathcal{M}^s_\mathrm{Hod}(X,r,\mathcal{O}_X)$:  the Zariski dense open subset of $\mathcal{M}_\mathrm{Hod}(X,r,\mathcal{O}_X)$ consisting of $\lambda$-flat bundles such that the underlying vector bundles are stable.
 \item $\mathcal{M}_\mathrm{B}(\mathcal{X},\mathrm{SL}(r,\mathbb{C}))$: the moduli space of irreducible representations of the fundamental group $\pi_1(\mathcal{X})$ of $X$ into $\mathrm{SL}(r,\mathbb{C})$.
 \item $\mathrm{TW}(X,X';r,\mathcal{O})$: the generalized Deligne--Hitchin  twistor space for a chosen $X^\prime\in\mathcal{M}({\mathcal{X}})$, which is obtained by gluing $\mathcal{M}_{\mathrm{Hod}}(X,r,\mathcal{O}_X)\times_{\mathbb{C}}\mathbb{C}^*$ and $\mathcal{M}_{\mathrm{Hod}}(X^\prime,r,\mathcal{O}_{X^\prime})\times_{\mathbb{C}}\mathbb{C}^*$ together via the generalized Deligne isomorphism.
\end{itemize}

By fully parallel arguments with those in \cite{BGHL}, we will get the Torelli-type theorem for generalized Deligne--Hitchin twistor spaces.
\begin{theorem}\label{tor}
Let $(X,X^\prime)\in \mathfrak{TD}(\mathcal{X})$ and $(Y,Y^\prime)\in \mathfrak{TD}(\mathcal{Y})$ be  two pairs of Riemann surfaces of genus $g\geq3$. If $\mathrm{TW}(X,X^\prime;r,\mathcal{O})$ is analytically isomorphic to $\mathrm{TW}(Y,Y^\prime;r,\mathcal{O})$ for $r\geq 2$, then either $X\simeq Y,X^\prime\simeq Y^\prime$ or $X\simeq Y^\prime, X^\prime \simeq Y$ in the moduli space $\mathcal{M}(\mathcal{X})$.
\end{theorem}

From the proof of Theorem \ref{23}, we have the following Corollary. 
\begin{corollary}
Let $(X,X^\prime)\in \mathfrak{TD}(\mathcal{X})$ and $(Y,Y^\prime)\in \mathfrak{TD}(\mathcal{Y})$ be  two pairs of Riemann surfaces of genus $g\geq3$. If $\mathrm{TW}(X,X^\prime;r,\mathcal{O})$ is analytically isomorphic to $\mathrm{TW}(Y,Y^\prime;r,\mathcal{O})$ for $r\geq 2$, then the pair $(X,X')$ coincides with $(Y,Y')$ or $(Y',Y)$ in the  space $\mathfrak{TD}(\mathcal{X})$.
\end{corollary}

\section{Twistor Lines and De Rham Sections}
\subsection{Existence of Twistor Lines}
For the generalized Deligne--Hitchin twistor space $\mathrm{TW}(X,X';r)$, we can also introduce the notion of twistor line.

\begin{definition}A holomorphic section $s:\mathbb{P}^1\rightarrow \mathrm{TW}(X,X';r)$ is called a \emph{twistor line} if over $\mathbb{P}^1\backslash\{\infty\}$ it can be written as
\begin{align*}
 s(\lambda)=((E,\bar{\partial}_E+\lambda\theta^\dagger_h), \lambda\delta'_h+\theta), \ \lambda\in \mathbb{P}^1\backslash\{\infty\},
\end{align*}
for a Higgs bundle $((E,\bar{\partial}_E),\theta)\in \mathcal{M}_{\mathrm{Dol}}(X,r)$ with the harmonic metric $h$, in other words,
\begin{align*}
  \mathbf{d}(s(\lambda))=((E,(\bar{\partial}_E+\lambda\theta^\dagger_h+ \delta'_h+\lambda^{-1}\theta)_{X'}^{0,1}),\lambda^{-1}(\bar{\partial}_E+\lambda\theta^\dagger_h+ \delta'_h+\lambda^{-1}\theta)_{X'}^{1,0}), \ \lambda\in \mathbb{P}^1\backslash\{0,\infty\}
\end{align*}
can be holomorphically extended to $\lambda=\infty$.
\end{definition}

Unlike usual Deligne--Hitchin twistor space $\mathrm{TW}_{\mathrm{DH}}(X,r)$, for a general Higgs bundle lying in $\mathcal{M}_{\mathrm{Dol}}(X,r)$, there might be no twistor line through it in generalized Deligne--Hitchin twistor spaces. To show the existence of  twistor lines, we turn to the de Rham sections, which are constructed by means of Simpson's work on  Bialynicki--Birula theory for Hodge moduli spaces \cite{CS9,CW,HH,HP,H}.

\begin{definition} [\cite{CS9}]
Let $E$ be a holomorphic vector bundle over $X$ with a flat connection $\nabla:E\rightarrow E\otimes_{\mathcal{O}_X}K_X$. A decreasing filtration $\mathcal{F}=\{F^p\}_{p=0}^k$ of $E$ by strict subbundles
$$
E=F^0\supset F^1\supset\cdots\supset F^k=0
$$
is called a \emph{Simpson filtration} if it satisfies the following two conditions:
\begin{itemize}
  \item[(1)] Griffiths transversality: $\nabla: F^p\rightarrow F^{p-1}\otimes_{\mathcal{O}_X}K_X$ for $p=1,\cdots,k-1$;
  \item[(2)] graded-semistability: the associated graded Higgs bundle $(\mathrm{Gr}_\mathcal{F}(E),\mathrm{Gr}_\mathcal{F}(\nabla))$, where $\mathrm{Gr}_\mathcal{F}(E)=\bigoplus_{p=0}^{k-1}E^p$ with $E^p=F^p/F^{p+1}$ and $\mathrm{Gr}_\mathcal{F}(\nabla)=\bigoplus_{p=1}^{k-1}\theta^p$ with $\theta^p: E^p\rightarrow E^{p-1}\otimes_{\mathcal{O}_X}K_X$, induced from $\nabla$, is a semistable Higgs bundle.
\end{itemize}
\end{definition}

\begin{theorem} [\cite{CS9}]\footnote{Recently, the similar result for stable Higgs bundles has been established by Hausel and Hitchin (see \cite[Proposition 3.4, Proposition 3.11]{H}).}\label{22}
Let $(E,\nabla)$ be a flat bundle over  $X$.
\begin{enumerate}
\item There exist  Simpson filtrations $\mathcal{F}$ on $(E,\nabla)$.
\item Let $\mathcal{F}_1$, $\mathcal{F}_2$ be two Simpson filtrations on $(E,\nabla)$, then the associated graded Higgs bundles $(\mathrm{Gr}_{\mathcal{F}_1}(E),\mathrm{Gr}_{\mathcal{F}_1}(\nabla))$ and $(\mathrm{Gr}_{\mathcal{F}_2}(E),\mathrm{Gr}_{\mathcal{F}_2}(\nabla))$ are $S$-equivalent.
\item $(\mathrm{Gr}_\mathcal{F}(E),\mathrm{Gr}_\mathcal{F}(\nabla))$ is a stable Higgs bundle iff the Simpson filtration is unique.
\end{enumerate}
\end{theorem}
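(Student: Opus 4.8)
The plan is to read a Simpson filtration off the degeneration of $(E,\nabla)$ under the $\mathbb{C}^*$-action on $\mathcal{M}_{\mathrm{Hod}}(X,r)$ sending $(E,D^\lambda)$ to $(E,tD^\lambda)$, which covers $\lambda\mapsto t\lambda$ on the base $\mathbb{C}$. Viewing the flat bundle as the point $(E,D^1)=(E,\nabla)$, its orbit $t\mapsto t\cdot(E,\nabla)$ is a section of $\pi$ over $\mathbb{C}^*$. The central dictionary, furnished by the Rees construction, is that filtrations $\mathcal{F}=\{F^\bullet\}$ of $E$ by strict subbundles satisfying Griffiths transversality are exactly the $\mathbb{C}^*$-equivariant extensions of this orbit across $\lambda=0$: setting $\xi(E,\mathcal{F})=\sum_p t^{-p}F^p\subset E\otimes\mathbb{C}[t,t^{-1}]$, one checks that $t\nabla$ maps $\xi(E,\mathcal{F})$ into $\xi(E,\mathcal{F})\otimes K_X$ precisely because $\nabla F^p\subset F^{p-1}\otimes K_X$, so that $\xi(E,\mathcal{F})$ is a family of $t$-connections specializing to $(E,\nabla)$ at $t=1$ and to the associated graded Higgs bundle $(\mathrm{Gr}_\mathcal{F}(E),\mathrm{Gr}_\mathcal{F}(\nabla))$ at $t=0$. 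Under this dictionary, graded-semistability is exactly the requirement that the $t=0$ limit be a genuine point of the Dolbeault moduli space, i.e.\ lie in the semistable locus.

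For assertion (1) I would produce a filtration by extending the orbit map to $\lambda=0$. Since $\mathcal{M}_{\mathrm{Hod}}(X,r)$ is non-proper, the valuative criterion does not apply directly; instead I would run a Langton-type semistable reduction on the family $\{(E,t\nabla)\}$ of $t$-connections, using the boundedness of semistable $\lambda$-flat bundles and the existence of harmonic metrics (Simpson, Mochizuki) to control the degeneration and extract a semistable limiting Higgs bundle in the closure of the orbit. The filtration is then recovered from this limit via the Rees dictionary; Griffiths transversality is automatic and graded-semistability holds by construction. I expect this existence step to be the main obstacle, since it is where the analytic input and the delicate control of the degeneration reside.

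For assertion (2) I would note that $\mathrm{Gr}_{\mathcal{F}_1}(E)$ and $\mathrm{Gr}_{\mathcal{F}_2}(E)$ both arise as limits $\lim_{t\to0}t\cdot(E,\nabla)$ of one and the same orbit. Passing to the coarse moduli space $\mathbb{M}_{\mathrm{Hod}}(X,r)$, which is a good quotient, the closure of a single orbit meets the closed (polystable) stratum in a unique point; hence the two semistable gradeds define the same point of the fibre over $\lambda=0$, so they share the same Jordan--H\"older polystable object. This is exactly the assertion that they are $S$-equivalent.

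For assertion (3) I would argue with the same limit-point picture, in both directions. Suppose first that $(\mathrm{Gr}_\mathcal{F}(E),\mathrm{Gr}_\mathcal{F}(\nabla))$ is stable. Given two Simpson filtrations $\mathcal{F}_1,\mathcal{F}_2$, part (2) makes their gradeds $S$-equivalent; but $S$-equivalence to a stable object forces isomorphism, so both gradeds are stable and isomorphic. I would then invoke rigidity of the Bialynicki-Birula stratum: a stable limit is an isolated point of the closed stratum, so the attracting direction through $(E,\nabla)$ is unique, whence $\mathcal{F}_1=\mathcal{F}_2$. Conversely, suppose the graded is strictly semistable; then its orbit is non-closed and its polystable limit carries a nontrivial Jordan--H\"older decomposition. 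Using a proper Jordan--H\"older sub-object of the limiting system of Hodge bundles, I would build a second, genuinely different degeneration of $(E,\nabla)$ over $\lambda=0$, producing a Simpson filtration distinct from $\mathcal{F}$, so that uniqueness fails. The delicate point in this direction is the explicit construction of a competing filtration from a sub-variation of Hodge structure of the semistable limit.
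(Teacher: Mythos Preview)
The paper does not prove this theorem at all: it is stated with a citation to Simpson's paper \cite{CS9} and used as a black box. So there is no ``paper's own proof'' to compare against; the relevant benchmark is Simpson's argument in \cite{CS9}, which proceeds by \emph{iterated destabilizing modifications}. One starts with the trivial filtration, takes the associated graded Higgs bundle, and if it is not semistable uses its maximal destabilizing Higgs subsheaf to refine the filtration; Simpson shows this process terminates after finitely many steps. This is a purely algebraic procedure on the flat bundle itself and requires no harmonic metrics, no Mochizuki, and no analytic control of a degeneration.

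Your Rees-construction dictionary is correct and is exactly the right way to relate filtrations to $\mathbb{C}^*$-equivariant extensions across $\lambda=0$, and your arguments for (2) are essentially sound. But your plan for (1) has a real gap: a generic Langton-type semistable reduction on the family $\{(E,t\nabla)\}$ produces \emph{some} extension across $t=0$, not necessarily a $\mathbb{C}^*$-equivariant one, and without equivariance the Rees dictionary does not hand you back a filtration of $E$. You would need to argue that each elementary modification in the Langton process can be chosen equivariantly, which amounts precisely to Simpson's observation that the modification comes from a Higgs subsheaf of the current graded---i.e.\ you are reinventing the iterated destabilizing modification, and the harmonic-metric input you invoke is a red herring. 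For (3), the sentence ``a stable limit is an isolated point of the closed stratum, so the attracting direction through $(E,\nabla)$ is unique'' is not an argument: the Bialynicki--Birula cell of a stable fixed point is typically positive-dimensional, and nothing you have said rules out two distinct filtrations with isomorphic stable gradeds. The actual uniqueness uses separatedness of the moduli \emph{stack} at stable points (so the two Rees families, not just their central fibres, must agree), or equivalently a direct comparison of the two filtrations inside $E$.
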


Consider the natural $\mathbb{C}^*$-action on $\mathbb{M}_{\mathrm{Hod}}(X,r)$ (also on $\mathcal{M}_{\mathrm{Hod}}(X,r)$) via $t\cdot (E,D^\lambda) := (E,tD^\lambda)$, which generalizes the well-known $\mathbb{C}^*$-action on the Dolbeault moduli spaces. Simpson showed that for a fixed $\lambda\neq0$, the limit $\lim\limits_{t\to0}t\cdot(E,D^\lambda)$ exists, and it can be obtained by taking the grading of a Simpson filtration on the associated flat bundle $(E,\lambda^{-1}D^\lambda)$, namely we have the following theorem.

\begin{theorem}[\cite{CS9}]\label{33}
Let $(E,D^\lambda)\in \mathbb{M}_{\mathrm{Hod}}(X,r)$ be a $\lambda$-flat bundle ($\lambda\neq 0$), then we have
$$
\lim\limits_{t\rightarrow 0}t\cdot(E,D^\lambda)=(\mathrm{Gr}_{\mathcal{F}_\lambda}(E),\mathrm{Gr}_{\mathcal{F}_\lambda}(\lambda^{-1}D^\lambda)),
$$
where ${\mathcal{F}_\lambda}$ is a Simpson filtration on the associated flat bundle $(E,\lambda^{-1}D^\lambda)$.
\end{theorem}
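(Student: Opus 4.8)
The plan is to reduce the assertion to a degeneration statement for the associated flat bundle, then to produce the limit explicitly by a rescaling adapted to a Simpson filtration, and finally to identify the resulting object with the genuine orbit limit inside the coarse moduli space via $S$-equivalence.

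First I would reduce to a flat connection. Since $\lambda\neq0$, set $\nabla:=\lambda^{-1}D^\lambda$, a genuine flat connection, so that
$$
t\cdot(E,D^\lambda)=(E,tD^\lambda)=(E,(t\lambda)\nabla).
$$
Thus the orbit passes through the $\mu$-flat bundles $(E,\mu\nabla)$ with $\mu=t\lambda$, and letting $t\to0$ amounts to letting $\mu\to0$. It therefore suffices to show that the limit of $(E,\mu\nabla)$ in $\mathbb{M}_{\mathrm{Hod}}(X,r)$ as $\mu\to0$ is the graded Higgs bundle of a Simpson filtration on $(E,\nabla)$, which exists by Theorem \ref{22}(1).

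Next I would carry out the explicit rescaling. Fix a Simpson filtration $\mathcal{F}=\{F^\bullet\}$ together with a splitting $E\cong\bigoplus_p E^p$, $E^p\cong F^p/F^{p+1}$, and let $g_t\in\Aut(E)$ act by $(t\lambda)^p$ on $E^p$. Conjugation by $g_t$ carries $(E,\mu\nabla)$ to an isomorphic $\mu$-flat bundle, and in block form with respect to $\mathcal{F}$ the component of $g_t(\mu\nabla)g_t^{-1}$ sending $E^p$ into $E^q\otimes K_X$ is scaled by $\mu^{\,1+q-p}$. Griffiths transversality annihilates every block with $q<p-1$; the diagonal blocks ($q=p$) and the deeper blocks ($q>p$) carry a positive power of $\mu$ and hence vanish as $t\to0$; and the single filtration-lowering block ($q=p-1$) carries $\mu^{0}=1$ and survives with finite limit equal to the $\mathcal{O}_X$-linear Higgs field $\theta^p:E^p\to E^{p-1}\otimes K_X$ induced by $\nabla$. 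Consequently the gauge-equivalent family extends holomorphically across $t=0$ with central fiber exactly $(\mathrm{Gr}_\mathcal{F}(E),\mathrm{Gr}_\mathcal{F}(\nabla))$; equivalently, this is packaged as a Rees-type family over $\mathbb{A}^1_t$, flat over the base, with generic fiber $(E,tD^\lambda)$ and central fiber the graded.

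Finally I would identify this central fiber with the honest orbit limit inside the moduli space. The graded-semistability condition guarantees $(\mathrm{Gr}_\mathcal{F}(E),\mathrm{Gr}_\mathcal{F}(\nabla))$ is a semistable Higgs bundle, hence defines a point of $\mathbb{M}_{\mathrm{Hod}}(X,r)$ in the fiber $\mathcal{M}_{\mathrm{Dol}}(X,r)$ over $0$. Since $\mathbb{M}_{\mathrm{Hod}}(X,r)$ is a coarse moduli space parametrizing polystable objects up to $S$-equivalence and is separated, the orbit admits at most one limit point, which by the construction above is the $S$-equivalence class of the graded; Theorem \ref{22}(2) makes this class independent of the chosen Simpson filtration $\mathcal{F}_\lambda$, so the formula is well posed. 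The main obstacle is precisely this last identification: one must know a priori that the categorical Bialynicki--Birula $\mathbb{C}^*$-limit exists and that the Rees family remains within the semistable locus, so that its central fiber genuinely computes the orbit-closure limit rather than merely a gauge-degenerate representative. This is the substantive input from Simpson's properness/valuative analysis of the Hodge moduli space, and controlling semistability along the degeneration is the crux of the argument.
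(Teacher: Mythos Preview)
The paper does not supply its own proof of this statement; it is quoted from Simpson \cite{CS9} without argument, so there is no in-paper proof to compare against.

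Your outline is essentially the standard Rees-construction argument and is correct in structure. One small omission: you track only the $(1,0)$-operator $\mu\nabla$ under the gauge $g_t$, but the holomorphic structure $\bar\partial_E$ must be handled simultaneously. Since each $F^p$ is a holomorphic subbundle, the block of $\bar\partial_E$ from $E^p$ to $E^q\otimes\Omega^{0,1}_X$ vanishes for $q<p$, and under conjugation by $g_t$ the remaining blocks scale by $\mu^{q-p}$ with $q-p\ge 0$; hence the limiting holomorphic structure is precisely $\bigoplus_p\bar\partial_{E^p}$ on $\mathrm{Gr}_\mathcal{F}(E)$, matching the $(1,0)$-limit you computed. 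With that addition the Rees family sits entirely in the semistable locus: for $t\neq 0$ the object is gauge-equivalent to $(E,tD^\lambda)$ and hence semistable, while the central fiber is semistable by the defining property of a Simpson filtration. So the concern you raise in your last paragraph largely dissolves: the coarse moduli property yields a morphism $\mathbb{A}^1\to\mathbb{M}_{\mathrm{Hod}}(X,r)$ agreeing with the orbit map on $\mathbb{G}_m$, and separatedness forces its value at $0$ to be the orbit limit. No further properness input is needed beyond what is already packaged into the construction.
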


Let $\mathrm{TW}(X,X';r)$ be a generalized Deligne--Hitchin twistor space.
Then the $\mathbb{C}^*$-action on $\mathcal{M}_{\mathrm{Hod}}(X,r)$ can be extended to an action on the entire twistor space. Indeed, one defines the $\mathbb{C}^*$-action
on $\mathcal{M}_{\mathrm{Hod}}(X',r)$ by $t\cdot (E,D^\lambda)=(E,t^{-1}D^\lambda)$, then the following diagram commutes
$$\CD
 \mathcal{ M}_{\mathrm{Hod}}(X,r) @>t\cdot>>\mathcal{ M}_{\mathrm{Hod}}(X,r) \\
  @V \mathbf{d} VV @V \mathbf{d} VV  \\
  \mathcal{M}_{\mathrm{Hod}}(X',r) @>t\cdot>> \mathcal{M}_{\mathrm{Hod}}(X',r).
\endCD$$
Fix a parameter $\lambda_0\in \mathbb{C}^*$, then associated to a given $\lambda_0$-flat bundle $((E,\bar{\partial}_E),D^{\lambda_0})\in \mathcal{M}_{\lambda_0}(X,r)$, we have a holomorphic section
 $$
 s_{\lambda_0}(\lambda)=((E,\bar{\partial}_E),\lambda\lambda_0^{-1}D^{\lambda_0})
 $$
 of $\mathcal{M}_{\mathrm{Hod}}(X,r)\times_{\mathbb{C}} \mathbb{C}^*\rightarrow \mathbb{C}^*$, by  Simpson's Theorem \ref{33}, the limit $\lim\limits_{\lambda\rightarrow0}s_{\lambda_0}(\lambda)$ exists, hence
 $s_{\lambda_0}(\lambda)$ can be holomorphically  extended to $\lambda=0$. On the other hand, by the generalized Deligne isomorphism, we have
  $$
  \mathbf{d}(s_{\lambda_0}(\lambda))=((E,(\lambda_0^{-1}D^{\lambda_0}+\bar{\partial}_E)_{X'}^{0,1}),\lambda^{-1}(\lambda_0^{-1}D^{\lambda_0}+\bar{\partial}_E)_{X'}^{1,0}),
  $$
as a section of $\mathcal{M}_{\mathrm{Hod}}(X',r)\times_{\mathbb{C}}\mathbb{C}^*\rightarrow \mathbb{C}^*$.
 Again by Simpson's Theorem \ref{33}, the limit $\lim\limits_{\lambda\rightarrow\infty}\mathbf{d}(s_{\lambda_0}(\lambda))$ also exists, hence $s_{\lambda_0}(\lambda)$ can be holomorphically  extended to $\lambda=\infty$. The extended section is also denoted by $s_{\lambda_0}$, and is  called the \emph{de Rham section} associated to $((E,\bar{\partial}_E),D^{\lambda_0})$. It is worth to mention that, the values $s(0)$ and $s(\infty)$ of the de Rham section $s_{\lambda_0}$ at $0$ and $\infty$ are $\mathbb{C}^*$-fixed points lying in the Dolbeault moduli spaces $\mathcal{M}_{\mathrm{Dol}}(X,r)$ and $\mathcal{M}_{\mathrm{Dol}}(X',r)$, respectively. Therefore, they both have the structures of \emph{systems of Hodge bundles} \cite{CS4}. In this paper, a   $\mathbb{C}^*$-fixed point lying in the Dolbeault moduli space  is called a \emph{ complex variation of Hodge structures} ($\mathbb{C}$-VHS). The subvariety  of $\mathcal{M}_{\mathrm{Dol}}(X,r)$ consisting of stable $\mathbb{C}$-VHSs is denoted by
$\mathcal{V}(X,r)$

\begin{theorem}\label{37}
If $((E,\bar{\partial}_E),\theta)\in \mathcal{M}_{\mathrm{Dol}}(X,r)$ is a $\mathbb{C}$-VHS, then there is a unique twistor line through it in $\mathrm{TW}(X,X';r)$.
\end{theorem}

\begin{proof}The Higgs bundle
$((E,\bar\partial_E),\theta)$  can be  expressed as
$$
\bar\partial_E=
       \begin{pmatrix}
       \bar\partial_{E_1}& & \\
       &\ddots  &   \\
        & &  \bar\partial_{E_k}\\
        \end{pmatrix}, \ \ \theta=
                            \begin{pmatrix}
                            0 & & & \\
                            \theta_1& 0 & &   \\
                            & \ddots &\ddots &\\
                            & & \theta_{k-1}& 0\\
                            \end{pmatrix},
$$
in terms of the holomorphic splitting $E=\bigoplus_{i=1}^kE_k$, which is orthogonal with respect to the harmonic metric $h$.
Let $s:\mathbb{P}^1\rightarrow \mathrm{TW}(X,X^\prime;r)$ be the de Rham section through the flat bundle $((E,\bar\partial_E+\theta^\dagger),\delta'_h+\theta)\in \mathcal{M}_{\mathrm{dR}}(X,r)$. We claim that $s$ provides the desired twistor line.

Over $\mathbb{P}^1\backslash\{0,\infty\}$, we have $s(\lambda)=((E,\bar\partial_E+\theta^\dagger_h),\lambda\delta'_h+\lambda\theta)$.
It is known that \cite{CW,HH0}
 \begin{align*}
   \lim\limits_{\lambda\rightarrow0}s(\lambda)=((E,\bar\partial_E),\theta).
 \end{align*}
  Therefore, it is  necessary to show
\begin{align*}
 s(\lambda)=((E,\bar\partial_E+\lambda\theta^\dagger_h),\lambda\delta'_h+\theta)
\end{align*}
for $\lambda\in \mathbb{P}^1\backslash\{0,\infty\}$, namely there are $C^\infty$-automorphisms $g_\lambda$ such that
\begin{align*}
  g_\lambda^{-1}\bar\partial_E(g_\lambda)+g_\lambda^{-1}\theta^\dagger_hg_\lambda&=\lambda\theta^\dagger_h,\\
  \lambda g_\lambda^{-1}\delta'_h(g_\lambda)+\lambda g_\lambda^{-1}\theta g_\lambda&=\theta.
\end{align*}
One can easily check the above equations have the solution
\begin{align}\label{ga}
 g_\lambda=
     \begin{pmatrix}
     \ \mathrm{Id}_{E_1}& & & \\
     & \ \lambda\mathrm{Id}_{E_2} & &   \\
     &  &\ddots &\\
     & && \lambda^{k-1}\mathrm{Id}_{E_k}\\
     \end{pmatrix}.
\end{align}
We conclude the claim.
\end{proof}

\subsection{Normal Bundles}
For a holomorphic section $s:\mathbb{P}^1\rightarrow \mathrm{TW}(X,X^\prime;r)$, the normal bundle
$N_{s}$ of $s$  is defined  by the following short exact sequence
\begin{align*}
 0\longrightarrow T{\mathbb{P}^1}\longrightarrow s^*T{\mathrm{TW}(X,X';r)}\longrightarrow N_{s}\longrightarrow0.
\end{align*}
Then the fiber of $N_{s}$ at $\lambda\in \mathbb{P}^1$ is $N_s|_{\lambda}\simeq T_{s(\lambda)}\pi^{-1}(\lambda)$.
Since the stability is an open condition, we have 
\begin{align*}
 N_s|_{\lambda}\simeq\left\{
                       \begin{array}{ll}
                        \mathbb{H}^1(\mathbf{HIG}_{s(0)}), & \hbox{$\lambda=0$;} \\
                        \mathbb{H}^1(\mathbf{HIG}_{s(\infty)}), & \hbox{$\lambda=\infty$;} \\
                         \mathbb{H}^1(\mathbf{DR}_{s(\lambda)}), & \hbox{$\lambda\in \mathbb{P}^1\backslash\{0,\infty\}$,}
                       \end{array}
                     \right.
\end{align*}
where $\mathbf{HIG}_{s(0)}$ is the Higgs complex associated to the Higgs bundle $s(0)=((E,\bar\partial_E),\theta)$
\begin{align*}
\mathbf{HIG}_{s(0)}:  E\xlongrightarrow{\theta}E\otimes_{\mathcal{O}_X} K_X,
\end{align*}
$\mathbf{HIG}_{s(\infty)}$ is similar,
and $\mathbf{DR}_{s(\lambda)}$ is the de Rham complex associated to the $\lambda$-flat bundle $s(\lambda)=((E,\bar\partial_E),D^\lambda)$
\begin{align*}
  \mathbf{DR}_{s(\lambda)}: E\xlongrightarrow{D^\lambda}E\otimes_{\mathcal{O}_X} K_X.
\end{align*}
By K\"{a}hler identities, the resolutions of Higgs complex and de Rham complex give rise to the isomorphisms 
\begin{align*}
 \mathbb{H}^1(\mathbf{HIG}_{s(0)})
\simeq& \mathcal{H}^1(((E,\bar\partial_E),\theta))\\
:=&\ \{(\alpha,\beta)\in\mathrm{\Omega}^{0,1}_X(\End(E))\oplus\mathrm{\Omega}^{1,0}_X(\End(E)) :\bar\partial_E\beta+[\theta\wedge \alpha]=\delta'_h\alpha+[\theta^\dagger_h\wedge\beta]=0\}
\end{align*}
for the harmonic metric $h$ on Higgs bundle $((E,\bar\partial_E),\theta)$, 
 and \begin{align*} \mathbb{H}^1(\mathbf{DR}_{s(\lambda)})
\simeq& \mathcal{H}^1(((E,\bar\partial_E),D^\lambda))\\
:=&\ \{(\alpha,\beta)\in\mathrm{\Omega}^{0,1}_X(\End(E))\oplus\mathrm{\Omega}^{1,0}_X(\End(E)) :\bar\partial_E\beta+D^\lambda\alpha=\delta'_h\alpha-\delta''_h\beta=0\}
\end{align*}
for the harmonic metric $h$ on $\lambda$-flat  bundle $((E,\bar\partial_E),D^\lambda)$.

In particular, let $s:\mathbb{P}^1\rightarrow \mathrm{TW}(X,X^\prime;r)$ be a twistor line through a $\mathbb{C}$-VHS $s(0)\in \mathcal{M}_{\mathrm{Dol}}(X,r)$, then we have the local trivializations 
\begin{align*}
 N_s|_{\mathbb{P}^1\backslash\{0\}}&\simeq \mathcal{H}^1(((E,\bar\partial_E),\theta))\times \mathbb{C},\\
 (\alpha+\lambda\beta^\dagger_h,\lambda\alpha^\dagger_h+\beta)&\mapsto ((\alpha,\beta),\lambda)
\end{align*}
and similarly,  $N_s|_{\mathbb{P}^1\backslash\{\infty\}}\simeq \mathcal{H}^1(((E,\bar\partial'_E),\theta'))\times \mathbb{C}$, where $((E,\bar\partial'_E),\theta')=((E,(\bar\partial_E+\delta'_h)_{X'}^{0,1}),(\theta+\theta^\dagger_h)_{X'}^{1,0})$ is the Higgs bundle corresponding to the flat bundle $((E,(\bar{\partial}_E+\theta^\dagger_h+ \delta'_h+\theta)_{X'}^{0,1}),(\bar{\partial}_E+\theta^\dagger_h+ \delta'_h+\theta)_{X'}^{1,0})$ via nonabelian Hodge correspondence.
 
\begin{definition}[\cite{Ver1}] 
Let $s:\mathbb{P}^1\rightarrow \mathrm{TW}(X,X^\prime;r)$ be a holomorphic section, and one writes  $N_{s}\simeq \bigoplus\limits_{i=1}^{\kappa}\mathcal{O}_{\mathbb{P}^1}(a_i)$ with integers $a_i$.
\begin{itemize}
\item $s$ is called a\emph{ balanced rational curve} if $a_1=\cdots=a_\kappa=w$, and $w$ is called the weight of $s$.
  \item $s$ is called a \emph{positive rational curve } if the degree of $N_{s}$, called the degree of $s$, i.e. $\sum\limits_{i=1}^{\kappa}a_i$ is a positive integer.
  \item  The ampleness index of $s$ is defined as
  \begin{align*}
   \mathrm{AI}(s)=\left\{
                    \begin{array}{ll}
                       \#\{a_i:a_i>0\}, & \hbox{$\{a_i:a_i>0\}\neq\emptyset$;} \\
                      0, & \hbox{$\{a_i:a_i>0\}=\emptyset$.}
                    \end{array}
                  \right.
  \end{align*}
If $\mathrm{AI}(s)>0$, namely there is some positive $a_i$,  $s$ is called a \emph{partially ample rational curve}.
  In particular, if $\textrm{AI}(s)=\kappa$, namely  each $a_i$ is a positive integer, $s$ is called an \emph{ample rational curve}.

\end{itemize}
\end{definition}

We have already known that all the twitor lines in Deligne--Hitchin twistor space are balanced rational curves with weight one.   By Kodaira's deformation theory \cite{K}, the infinitesimal deformations of  the section $s$ in $\mathrm{TW}(X,X';r)$ are  given by the elements in the cohomology $H^0(\mathbb{P}^1,N_{s})$, and the corresponding obstruction is described by the cohomology $H^1(\mathbb{P}^1,N_{s})$. Therefore, if $s$ is an ample rational curve, any infinitesimal deformation of  $s$ is unobstruted. The following proposition provides some obstructions of existence of ample rational curves in generalized Deligne--Hitchin twistor space.

\begin{proposition}\label{38}
Assume the generalized Deligne--Hitchin twistor space $\mathrm{TW}(X,X';r)$ admits an ample rational curve $s$.
\begin{enumerate}
\item Let $\delta$ be a global holomorphic $p$-form ($p>0$) on $\mathrm{TW}(X,X^\prime;r)$, and denote the zero locus of $\delta$ by $Z(\delta)$. Then we have $\mathrm{Im}(s)\subseteq Z(\delta)$, hence there is no global holomorphic $p$-form on $\mathrm{TW}(X,X^\prime;r)$ only with isolated zeros. In particular, there do not exist nonzero global holomorphic $p$-forms on the Deligne--Hitchin twistor space $\mathrm{TW}_{\mathrm{DH}}(X,r)$.
  \item Assume $s(0)\in \mathcal{M}_{\mathrm{Dol}}(X,r)$ represents a Higgs bundle with the  nonzero Higgs field. Define $T^*\mathrm{TW}(X,X^\prime;r)(-i)=T^*\mathrm{TW}(X,X^\prime;r)\otimes \pi^*\mathcal{O}_{\mathbb{P}^1}(-i)$ for some positive integer $i$, then $H^1(\mathrm{TW}(X,X^\prime;r),T^*\mathrm{TW}(X,X^\prime;r)(-i))$ is non-zero.
\end{enumerate}
\end{proposition}

\begin{proof}
(1)
 We have the holomorphic splitting
\begin{align*}
  s^*(\bigwedge^pT^*\mathrm{TW}(X,X^\prime;r))\simeq \bigwedge^pN_s^*\oplus (\mathcal{O}_{\mathbb{P}^1}(-2)\otimes\bigwedge^{p-1} N_s^*).
\end{align*}
By assumption, $N^*_{s}\simeq \bigoplus\limits_{i=1}^{\kappa}\mathcal{O}_{\mathbb{P}^1}(-a_i)$ for all $a_i$'s positive, then   $\delta\in H^0(\mathrm{TW}(X,X';r),\bigwedge^pT^*\mathrm{TW}(X,X';r))$ vanishes when restricting on  $s$. In particular, for the Deligne--Hitchin twistor space $\mathrm{TW}_{\mathrm{DH}}(X,r)$, all twistor lines are ample rational curves and  cover the entire space, hence the global holomorphic $p$-form on $\mathrm{TW}_{\mathrm{DH}}(X,r)$ must vanish (also cf. \cite[Proposition 2.2]{NH0}).

 (2) Pick the following  reduced divisor of $\mathrm{TW}(X,X^\prime;r)$
  \begin{align*}
    D=\pi^{-1}(0)+\sum_{\lambda_\mu\in \mathbb{P}^1\backslash\{0\},\mu=1,\cdots,i-1}\pi^{-1}(\lambda_\mu),
  \end{align*}
   then we have the  short exact sequence
 \begin{align*}
  0\rightarrow T^*\mathrm{TW}(X,X^\prime;r)(-i)\rightarrow T^*\mathrm{TW}(X,X^\prime;r)\rightarrow T^*\mathrm{TW}(X,X^\prime;r)|_D\rightarrow0.
 \end{align*}
 If $H^1(\mathrm{TW}(X,X^\prime;r),T^*\mathrm{TW}(X,X^\prime;r)(-i))$ is zero, then the restriction map induces a  surjection
 \begin{align*}
    H^0(\mathrm{TW}(X,X^\prime;r),T^*\mathrm{TW}(X,X^\prime;r)) \twoheadrightarrow H^0(D,T^*\mathrm{TW}(X,X^\prime;r)|_D).
 \end{align*}

On the other hand,  consider the following 1-form $\gamma$ on $\pi^{-1}(0)=\mathcal{M}_{\mathrm{Dol}}(X,r)$  pointwisely defined as
 \begin{align*}
  \gamma|_{u}((\alpha,\beta))=\int_X\Tr(\theta\wedge \alpha),
 \end{align*}
  where $u=((E,\bar\partial_E),\theta)\in \mathcal{M}_{\mathrm{Dol}}(X,r)$, $(\alpha,\beta)\in  \mathcal{H}^1(((E,\bar\partial_E),\theta))$ is a tangent vector at $u$. One easily checks that
  \begin{itemize}
    \item $\gamma$ is well-defined since it is independent of the choice of the representative  of $u$;
    \item $\gamma$ is a $(1,0)$-form;
    \item $\gamma$ is a holomorphic form;
    \item $\gamma$ dose not vanish for some tangent vector $(\alpha,\beta)$ at $u=s(0)$.
  \end{itemize}
  Hence, it follows from (1) that $\gamma$ is not the restriction of some global holomorphic 1-form on $\mathrm{TW}(X,X^\prime;r)$. This contradiction means that $H^1(\mathrm{TW}(X,X^\prime;r),T^*\mathrm{TW}(X,X^\prime;r)(-i))$ is non-zero.
\end{proof}

\begin{theorem}\label{mmmmm}\
\begin{enumerate}
  \item Let $u=((E,\bar\partial_E),\theta)\in \mathcal{M}_{\mathrm{Dol}}(X,r)$ be a $\mathbb{C}$-VHS, and $s$  be a  twistor line through it in $\mathrm{TW}(X,X';r)$, then $s$ is a partially ample rational curve with the ampleness index $\mathrm{AI}(s)\geq g$.
  \item Let $u=((E,\bar\partial_E),0)\in \mathcal{M}_{\mathrm{Dol}}(X,r)$ be a stable vector bundle with zero Higgs field, and $s$  be a  de Rham section through it in $\mathrm{TW}(X,X';r)$, then $s$ is  a partially ample rational curve with the  ampleness index $\mathrm{AI}(s)\geq\jmath= r^2(g-1)+1$. In particular, assume $s$ is a twistor line  through $u$, then $s$ is  an ample rational curve with  degree $2\jmath$ or $3\jmath$, and $s$ is a balanced rational curve with weight one iff $X'\simeq \bar X$ in the Teichm\"{u}ller space  $\mathrm{Tei}(\mathcal{X})$.
\end{enumerate}
\end{theorem}

\begin{proof}
(1) We need to show the normal bundle $N_s$ contains a subbundle $L\simeq\mathcal{O}_{\mathbb{P}^1}(1)^{\oplus\nu}$ for $\nu\geq g$. The subvariety $\mathcal{V}(X,r)$ has many connected  components, namely $\mathcal{V}(X,r)=\coprod\limits_i\mathcal{V}_i(X,r)$, where $i$ is the index of the connected components. Assume $u\in \mathcal{V}_i(X,r)$, and take
 $(\alpha,\beta)\in T_u\mathcal{V}_i(X,r)$,  then one considers the infinitesimal deformation of $s|_{\mathbb{P}^1\backslash\{\infty\}}$ as
 $((E,\bar{\partial}_E+\alpha+\lambda\theta^\dagger_h+\lambda\beta^\dagger_h),\lambda\delta'_h+\lambda\alpha^\dagger_h+\theta+\beta)$ for $\lambda\in \mathbb{C}$.
 By Theorem \ref{37},   we have
\begin{align*}
&\mathbf{d}(((E,\bar{\partial}_E+\alpha+\lambda\theta^\dagger_h+\lambda\beta^\dagger_h),\lambda\delta'_h+\lambda\alpha^\dagger_h+\theta+\beta))\\
 =&\ \mathbf{d}(((E,\bar{\partial}_E+\alpha+\theta^\dagger_h+\beta^\dagger_h),\lambda\delta'_h+\lambda\alpha^\dagger_h+\lambda\theta+\lambda\beta))\\
 =&\ ((E,(\bar{\partial}_E+\alpha+\theta^\dagger_h+\beta^\dagger_h+\delta'_h+\alpha^\dagger_h+\theta+\beta)_{X'}^{0,1}),\lambda^{-1}(\bar{\partial}_E+\alpha+\theta^\dagger_h+\beta^\dagger_h+\delta'_h+\alpha^\dagger_h+\theta+\beta)_{X'}^{1,0})
\end{align*}
for $\lambda\in \mathbb{C}^*$.
It follows that $N_s$ has a subbundle $L$ such that $L|_u\simeq T_u\mathcal{V}_i(X,r)$
and the  transition matrix of $L$   is determined by the identification
\begin{align*}
&(\alpha+\beta^\dagger_h,\lambda\alpha^\dagger_h+\lambda\beta)\\
\simeq&\ (\lambda^{-1}(\alpha+\alpha^\dagger_h)^{1,0}_{X'}+\lambda^{-1}(\beta+\beta^\dagger_h)^{1,0}_{X'},(\alpha+\alpha^\dagger_h)^{0,1}_{X'}+(\beta+\beta^\dagger_h)^{0,1}_{X'}).
\end{align*}
Therefore  $L\simeq\mathcal{O}_{\mathbb{P}^1}(1)^{\oplus\nu}$, where $\nu=\dim_\mathbb{C}T_u\mathcal{V}_i(X,r)$. It is known that  the minimal dimension of the irreducible components of $\mathcal{V}(X,r)$ is exactly $g$ \cite[Theorem 4.4]{HH}.

(2) We need to show the normal bundle $N_s$ contains a subbundle $L\simeq\mathcal{O}_{\mathbb{P}^1}(1)^{\oplus\jmath}$ for $\jmath=r^2(g-1)+1$.Since $(E,\bar\partial_E)$ is a stable vector bundle, we have $\lim\limits_{t\rightarrow 0}t\cdot ((E,\bar\partial_E),D^\lambda)=u\in \mathcal{M}_{\mathrm{Dol}}(X,r)$ for any $\lambda\in \mathbb{C}$. Stability is an open condition, so we consider $(\alpha,\beta)\in \mathcal{H}^1(((E,\bar\partial_E),\nabla))$, and calculate
\begin{align*}
&\mathbf{d}(((E,\bar{\partial}_E+\alpha),\lambda\nabla+\lambda\beta))\\
 =&\ ((E,(\bar{\partial}_E+\alpha+\nabla+\beta)_{X'}^{0,1}),\lambda^{-1}(\bar{\partial}_E+\alpha+\nabla+\beta)_{X'}^{1,0})
\end{align*}
for $\lambda\in \mathbb{C}^*$. Therefore, for any de Rham section $s$ through $u$ in $\mathrm{TW}(X,X';r)$, the normal bundle $N_s$ has a subbundle $L$ such that
 $L \simeq\mathcal{O}_{\mathbb{P}^1}(1)^{\oplus\jmath}$
  and $L|_u\simeq T_u\mathcal{N}(X,r)$, where $u$ is treated as a point lying in the   moduli space $\mathcal{N}(X,r)$ of stable vector bundles over $X$ of rank $r$. Hence  $\jmath=\dim_\mathbb{C}\mathcal{N}(X,r)=r^2(g-1)+1$.

In particular, let $s$ be  a twitor line through $u$. Since $\mathcal{H}^1(((E,\bar\partial_E),0))\simeq\mathfrak{H}^1\times \mathfrak{H}^1$, where
$\mathfrak{H}^1\simeq\{\alpha\in\mathrm{\Omega}^{0,1}_X(\End(E)) :\delta'_h\alpha=0\}\simeq \{\beta\in\mathrm{\Omega}^{1,0}_X(\End(E)) :\bar\partial_E\beta=0\}$, we can consider the deformations of holomorphic structure of bundle and Higgs field separately. The former has been taken into account  in (1), namely $N_s$ has a subbundle $L$ such that $L|_u\simeq T_u\mathcal{N}(X,r)$ and $L\simeq\mathcal{O}_{\mathbb{P}^1}(1)^{\oplus\jmath}$.  For the later, we have
\begin{align*}
&\mathbf{d}(((E,\bar{\partial}_E+\lambda\beta^\dagger_h),\lambda\delta'_h+\beta))\\
 =&\ ((E,(\bar{\partial}_E+\lambda\beta^\dagger_h+\delta'_h+\lambda^{-1}\beta)_{X'}^{0,1}),\lambda^{-1}(\bar{\partial}_E+\lambda\beta^\dagger_h+\delta'_h+\lambda^{-1}\beta)_{X'}^{1,0})
\end{align*}
for $\lambda\in \mathbb{C}^*$, then  $N_s/L\simeq\mathcal{O}_{\mathbb{P}^1}(a)^{\oplus \jmath}$ with $a=1$ or $2$, and $a=1$ iff $X'\simeq \bar X$ in the Teichm\"{u}ller space  $\mathrm{Tei}(\mathcal{X})$.
\end{proof}

\subsection{Energy of de Rham Sections}
In this subsection, we apply the theory developed by Beck, Heller and R\"oser in \cite{BHR1} to investigate the de Rham sections in generalized Deligne--Hitchin twistor spaces. Although their theory is established for usual Deligne--Hitchin twistor spaces, it also works for our  generalized spaces.
Given a flat  bundle $((E,\bar\partial_E),\nabla)\in \mathcal{M}_{\mathrm{dR}}(X,r)$, there is a Simpson filtration $\mathcal{F}=\{F^p\}_{p=0}^k$ on it, then in terms of the  $C^\infty$-spitting $E\simeq \bigoplus_{p=0}^{k-1} E^p$ with $E^p=F^p/F^{p+1}$, we write
\begin{align*}
 \bar\partial_E& =
       \begin{pmatrix}
       \bar\partial_{E^0}&\alpha_{0,1} & \alpha_{0,2}&\cdots&\alpha_{0,k-1}\\
      0 & \bar\partial_{E^1}&\alpha_{1,2} & \cdots&\alpha_{1,k-1}\\
     \vdots &\ddots &\ddots&  \ddots& \vdots \\
       0&\cdots &0 & \bar\partial_{E^{k-2}}&\alpha_{k-2,k-1}\\
       0 &\cdots &0 & 0& \bar\partial_{E^{k-1}}\\
        \end{pmatrix},\\
         \nabla&=
                            \begin{pmatrix}
                            \nabla_{E^0} & \beta_{0,1} & \beta_{0,2}&\cdots&\beta_{0,k-1} \\
                           \theta^1& \nabla_{E^1} &\beta_{1,2} & \cdots&\beta_{1,k-1}  \\
                           \vdots &  \ddots &\ddots & \ddots& \vdots \\
                           0 & \cdots&  \theta^{k-2}&\nabla_{E^{k-2}}&\beta_{k-2,k-1}\\
                          0 & \cdots& 0&  \theta^{k-1}&\nabla_{E^{k-1}}\\
                            \end{pmatrix},
\end{align*}
with induced holomorphic structure $\bar\partial_{E^p}$ and induced connection  $\nabla_{E^p} $ on $E^p$,  morphisms
$\alpha_{p,q}: E^q\rightarrow E^p\otimes_{C^\infty(X)} \Omega^{0,1}_X$, $\beta_{p,q}: E^q\rightarrow E^p\otimes_{C^\infty(X)} \Omega^{1,0}_X$ and nonzero morphisms $\theta^p: E^{p-1}\rightarrow E^{p}\otimes_{C^\infty(X)} \Omega^{1,0}_X$. Let $s:\mathbb{P}^1\rightarrow\mathrm{TW}(X,X^\prime;r)$ be the de Rham section  through $((E,\bar\partial_E),\nabla)$, then the energy of $s$ is defined as (for more details see \cite[Sect.2]{BHR1})
\begin{align*}
  \mathbb{E}(s)=\frac{1}{2\pi\sqrt{-1}}\sum_{p=0}^{k-1}\int_X\Tr(\theta^{p+1}\wedge \alpha_{p,p+1}).
\end{align*}
Actually, via the gauge transformations as the form of \eqref{ga}, one gets   a lift $s(\lambda)=((E,(d''_E)_\lambda),D^\lambda)$ (cf. \cite[Lemma 2.2]{BHR}, or \cite[Lemma 1.10]{BHR1}) over $\mathbb{C}$ of $s$, where
\begin{align*}
 (d''_E)_\lambda&=\begin{pmatrix}
       \bar\partial_{E^0}&\lambda\alpha_{0,1} & \lambda^2\alpha_{0,2}&\cdots&\lambda^{k-1}\alpha_{0,k-1}\\
      0 & \bar\partial_{E^1}&\lambda\alpha_{1,2} & \cdots&\lambda^{k-2}\alpha_{1,k-1}\\
      \vdots &\ddots &\ddots&  \ddots& \vdots \\
      0 &\cdots &0 & \bar\partial_{E^{k-2}}&\lambda\alpha_{k-2,k-1}\\
       0 & \cdots&0 &0 & \bar\partial_{E^{k-1}}\\
        \end{pmatrix}),\\
                      D^\lambda&=      \begin{pmatrix}
                            \lambda\nabla_{E^0} &\lambda^2\beta_{0,1} & \lambda^3\beta_{0,2}&\cdots&\lambda^{k}\beta_{0,k-1}   \\
                           \theta^1& \lambda\nabla_{E^1} &\lambda^2\beta_{1,2} & \cdots&\lambda^{k-1}\beta_{1,k-1}  \\
                           \vdots &  \ddots &\ddots &\ddots& \vdots \\
                           0 & \cdots&  \theta^{k-2}&\lambda\nabla_{E^{k-2}}&\lambda^2\beta_{k-2,k-1}\\
                          0 & \cdots& 0&  \theta^{k-1}&\lambda\nabla_{E^{k-1}}\\
                            \end{pmatrix}).
\end{align*}

\begin{theorem}\label{39}Let $s:\mathbb{P}^1\rightarrow \mathrm{TW}(X,X^\prime;r)$ be a de Rham section, then its energy $\mathbb{E}(s)$ is semi-negative. In particular, $\mathbb{E}(s)=0$ iff the underlying vector bundle of $s(1)$ is stable.
\end{theorem}
\begin{proof} $s(1)$ is a flat bundle $((E,\bar\partial_E),\nabla)$, which carries a Simpson filtration $\mathcal{F}=\{F^p\}_{p=0}^k$, then  taking grading with respect to $\mathcal{F}$, we get a   $\mathbb{C}$-VHS as $s(0)$,  denoted by $((E'=\bigoplus_{p=0}^{k-1} E^p,{\bar\partial_E}'=\bigoplus_{p=0}^{k-1} \bar\partial_{E^p}),\theta=\bigoplus_{p=0}^{k-1} \theta^{p+1})$.
The degree of $(E^0,\bar\partial_{E^0})$ can be calculated by the curvature $F_{\nabla_{E^0}}$ of the connection $\nabla_{E^0}$, then by the flatness of $((E,\bar\partial_E),\nabla)$, we have
\begin{align*}
  \deg(E^0)&=\frac{\sqrt{-1}}{2\pi}\int_X\Tr(F_{\nabla_{E^0}})\\
  &=\frac{\sqrt{-1}}{2\pi}\sum_{p=0}^{k-1}\int_X\Tr(\theta^{p+1}\wedge \alpha_{p,p+1}),
\end{align*}
hence we obtain
\begin{align}\label{zs}
 \mathbb{E}(s)=-\deg(E^0).
\end{align}
Assume $k>1$, then since $((E'/E^0,\bar\partial_{E'}/\bar\partial_{E^0}),\theta/\theta^1)$ is a Higgs subbundle of $((E',\bar\partial_{E'}),\theta)$, the degree of $E'/E^0$ is negative, namely the energy of $s$ is also negative. The energy of $s$ is zero iff $k=1$, i.e. $(E,\bar\partial_E)$ is a stable vector bundle.
\end{proof}

\begin{corollary}Assume $r=2$, then $ \mathbb{E}(s)\in[1-g,0]$.
\end{corollary}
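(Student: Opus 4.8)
The plan is to leverage Theorem \ref{semi-neg}, which already identifies $\mathbb{E}(s)=\deg(L)$, where $L=\ker(\theta)$ is the kernel bundle of the Higgs field of the $\mathbb{C}$-VHS $(E',\theta)$ obtained by grading a Simpson filtration on $s(1)$, and which furnishes the upper bound $\mathbb{E}(s)\leq 0$. Thus the only thing left is the lower bound $\mathbb{E}(s)\geq 1-g$ in the rank-two case, and I would obtain it by exploiting the rigid structure of rank-two systems of Hodge bundles together with the nonvanishing of the Higgs field.

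First I would dispose of the trivial case $\theta=0$: here $L=E'$, so $\mathbb{E}(s)=\deg(E')=0$, which lies in $[1-g,0]$ since $g\geq 2$. So assume $\theta\neq 0$. Then, being a $\mathbb{C}^*$-fixed point, the $\mathbb{C}$-VHS $(E',\theta)$ is a system of Hodge bundles, and for $r=2$ it decomposes into line bundles $E'=E^0\oplus E^1$ with the Higgs field given by a single nonzero $\mathcal{O}_X$-linear map $\theta\colon E^1\to E^0\otimes K_X$, all other components vanishing. Since a nonzero morphism of line bundles is injective as a sheaf map, $\ker(\theta)=E^0$, whence $L=E^0$ and $\mathbb{E}(s)=\deg(E^0)$.

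Next I would extract the degree inequality. The nonzero field $\theta$ is a nonzero global section of $\Hom(E^1,E^0)\otimes K_X=\big(E^0\otimes (E^1)^{-1}\big)\otimes K_X$, so $\deg(E^0)-\deg(E^1)+(2g-2)\geq 0$. The vanishing first Chern class of the graded object gives $\deg(E^0)+\deg(E^1)=0$, and eliminating $\deg(E^1)$ yields $2\deg(E^0)\geq -(2g-2)$, i.e. $\deg(E^0)\geq 1-g$. Combined with $\mathbb{E}(s)\leq 0$ from Theorem \ref{semi-neg}, this gives $\mathbb{E}(s)=\deg(E^0)\in[1-g,0]$.

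The main obstacle is conceptual rather than computational: one must verify that a rank-two $\mathbb{C}$-VHS genuinely has the two-step Hodge decomposition described above, so that $\ker(\theta)$ is precisely the line bundle $E^0$, and that the constraint $\deg(E^0)+\deg(E^1)=0$ holds for the graded object. Both follow from the definition of a system of Hodge bundles and from the fact that grading a Simpson filtration preserves the Chern class; once these are in hand, the lower bound drops out purely from the nonvanishing of the section $\theta$ via the effectivity estimate $\deg\big(E^0\otimes(E^1)^{-1}\otimes K_X\big)\geq 0$.
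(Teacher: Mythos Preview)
Your proposal is correct and follows essentially the same route as the paper's proof: both invoke Theorem \ref{semi-neg} to reduce to $\mathbb{E}(s)=\deg(L)$ with $L=\ker(\theta)$, then in the nontrivial case use the rank-two Hodge decomposition $E'=L'\oplus L$ together with the nonvanishing of $\theta\colon L'\to L\otimes K_X$ and the constraint $\deg(L)+\deg(L')=0$ to obtain $2\deg(L)+2g-2\geq 0$. The paper's version is slightly terser, but the argument is the same.
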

\begin{proof}If $ \mathbb{E}(s)<0$, then there is a nonzero holomorphic morphism $\theta^1:E^0\rightarrow E^1\otimes_{\mathcal{O}_X} K_X$, hence $-2\deg(E^0)+2g-2\geq0$. The conclusion follows.
\end{proof}

\begin{remark}
Define $\tilde s(\lambda)=\lambda^{-1}s(\lambda^2)$ for $\lambda\in \mathbb{C}^*$, which can be holomorphically  extended to $\mathbb{P}^1$.  It is obvious that $\tilde s(\lambda)=s(\lambda)$, which means that, in the language of  \cite[Definition 1.18]{BHR1}, the twist of de Rham section is itself. For the case $r=2$, it follows from \cite[Proposition 2.7]{BHR1} that
\begin{align*}
  \mathbb{E}(s)=2\mathbb{E}(s)-\deg(L),
\end{align*}
for the kernel bundle $L$ of the Higgs field $\theta$ (in this case $L=E^1$), which recovers the equality \eqref{zs}.
\end{remark}

\section{Balanced Metrics}

Let $\widetilde{\mathrm{TW}}(X,X';r)$ be the underlying $C^\infty$-manifold of the generalized Deligne--Hitchin twistor space $\mathrm{TW}(X,X';r)$. Then by our construction, there is  a global $C^\infty$-trivialization $\varphi:\widetilde{\mathrm{TW}}(X,X';r)\simeq\widetilde{\mathcal{M}}_{\mathrm{Dol}}(X,r)\times \mathbf{S}^2$, and let $\varphi_1:=p_1\circ\varphi:\widetilde{\mathrm{TW}}(X,X';r)\rightarrow\widetilde{\mathcal{M}}_{\mathrm{Dol}}(X,r), \varphi_2:=p_2\circ\varphi: \widetilde{\mathrm{TW}}(X,X';r)\rightarrow \mathbf{S}^2$, where $p_i,i=1,2,$ is the natural projection on the $i$-th factor. Introduce a product metric as
$$
G=\varphi_1^*g+\varphi_2^*h,
$$
where $g$ is the  hyperK\"{a}hler metric on $\widetilde{\mathcal{M}}_{\mathrm{Dol}}(X,r)$ \cite{NH,Fuj,bb}, and $h$ is the Fubini--Study metric on $\mathbf{S}^2$. In particular, when $r=1$, the metric $G$ is complete \cite{bb}.   The analytic structure of $\mathrm{TW}(X,X';r)$ endows a complex structure $\mathfrak{I}$ on $\widetilde{\mathrm{TW}}(X,X';r)$, then
we define  a Hermitian form $\omega$ via
$$
\omega(S_1,S_2)=G(\mathfrak{I}S_1,S_2)
$$
for  tangent vectors $S_{1}, S_{2}\in T\widetilde{\mathrm{TW}}(X,X';r)$. To check the   balanceness of $\omega$, one can apply \cite[Proposition 4.5]{KV} (also see {\cite[Theorem 1]{tom}}) since it is just a local problem, then  we immediately obtain the following theorem.

\begin{theorem}\label{balance}
The Hermitian form $\omega$ is balanced, i.e. $d\omega^{\kappa}=0$.
\end{theorem}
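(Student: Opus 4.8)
The plan is to reduce the balancedness of $\omega$ on $\mathrm{TW}(X,X';r)$ to the general criterion of Kaledin--Verbitsky, which asserts that the twistor space of a hyperK\"ahler manifold carries a natural balanced metric. The essential point is that the ``Another Description'' paragraph already realizes $\mathrm{TW}(X,X';r)$ as a fibration obtained by gluing $\widetilde{\mathcal{M}}_{\mathrm{Dol}}(X,r)\times\mathbf{S}$ and $\widetilde{\mathcal{M}}_{\mathrm{Dol}}(X',r)\times\mathbf{N}$, where each fiber over $\overrightarrow{x}$ carries the complex structure $\mathbb{I}_{\overrightarrow{x}}=x_1\mathbb{I}+x_2\mathbb{J}+x_3\mathbb{K}$ obtained from the \emph{hyperK\"ahler} structure on $\widetilde{\mathcal{M}}_{\mathrm{Dol}}(X,r)$. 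Thus, away from the technicalities of the gluing, $\mathrm{TW}(X,X';r)$ looks locally exactly like the Hitchin twistor space of a hyperK\"ahler manifold, and the metric $G=\varphi_1^*g+\varphi_2^*h$ is precisely the product of the hyperK\"ahler metric with the round metric on $\mathbf{S}^2\simeq\mathbb{P}^1$. This is the setup to which \cite[Proposition 4.5]{KV} (or \cite[Theorem 1]{tom}) directly applies.

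First I would recall the precise statement of the Kaledin--Verbitsky balancedness criterion: for a hyperK\"ahler manifold $(M,g)$ with complex structures $\mathbb{I},\mathbb{J},\mathbb{K}$, the twistor space $\mathrm{TW}(M)$, equipped with the complex structure coming from the tautological complex structure on fibers together with the standard one on $\mathbb{P}^1$, and with the Hermitian form induced by the product of $g$ and the Fubini--Study metric on the base, is balanced; that is, if $\omega$ denotes the associated $(1,1)$-form and $n=\dim_{\mathbb{C}}M$ is the complex dimension of the fiber, then $d\,\omega^{n}=0$. In our case $n=\dim_{\mathbb{C}}\widetilde{\mathcal{M}}_{\mathrm{Dol}}(X,r)=\dim_{\mathbb{C}}\mathcal{M}_{\mathrm{B}}(\mathcal{X},r)=\kappa$, so the conclusion reads $d\,\omega^{\kappa}=0$, which is exactly the claimed identity.

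Second, I would verify that the hypotheses of the criterion are genuinely met by our construction. The key input is that $\widetilde{\mathcal{M}}_{\mathrm{Dol}}(X,r)$ carries a complete hyperK\"ahler metric $g$ compatible with $\mathbb{I},\mathbb{J},\mathbb{K}$ \cite{NH,Fuj,bb}, and that the complex structure $\mathfrak{I}$ on $\widetilde{\mathrm{TW}}(X,X';r)$ induced from the analytic structure of $\mathrm{TW}(X,X';r)$ agrees fiberwise with $\mathbb{I}_{\overrightarrow{x}}$ under the trivialization $\varphi$; this is exactly what the ``Another Description'' paragraph establishes via Simpson's \cite[Theorem 4.2]{CS7}. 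One subtlety here is that the gluing in the generalized setting uses $\widetilde{\mathcal{M}}_{\mathrm{Dol}}(X',r)$ on the chart over $\mathbf{N}$ rather than the complex conjugate $\bar X$; however, since $\widetilde{\mathcal{M}}_{\mathrm{Dol}}(X',r)$ is diffeomorphic to $\widetilde{\mathcal{M}}_{\mathrm{Dol}}(X,r)$ and the biholomorphisms $f_{\overrightarrow{x},\overrightarrow{x'}}$ intertwine the fiberwise complex structures, the total space is still \emph{locally} isometric to a standard twistor space, and the balancedness identity $d\,\omega^{\kappa}=0$ is a pointwise (in fact local) statement. Hence it transfers verbatim.

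The main obstacle I anticipate is justifying the applicability of Kaledin--Verbitsky in a setting where the two gluing pieces come from genuinely different hyperK\"ahler manifolds $\widetilde{\mathcal{M}}_{\mathrm{Dol}}(X,r)$ and $\widetilde{\mathcal{M}}_{\mathrm{Dol}}(X',r)$, rather than a single $M$ with its antipodal real structure. Since balancedness $d\,\omega^{\kappa}=0$ is a local differential identity that holds on each chart separately (the metric $G$ is built chart-by-chart from a hyperK\"ahler metric and the round metric, and the criterion is local in nature), the gluing does not introduce any new obstruction: the identity holds on the open set over $\mathbf{S}$ using the hyperK\"ahler structure of $\widetilde{\mathcal{M}}_{\mathrm{Dol}}(X,r)$, holds on the open set over $\mathbf{N}$ using that of $\widetilde{\mathcal{M}}_{\mathrm{Dol}}(X',r)$, and these two open sets cover $\mathrm{TW}(X,X';r)$. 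Therefore the cleanest route is to phrase the argument as: apply the criterion locally on each of the two standard-twistor charts and observe that the global form $\omega^{\kappa}$ is closed because its exterior derivative vanishes on each member of an open cover. This completes the proof.
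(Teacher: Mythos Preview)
Your proposal is correct and follows essentially the same approach as the paper: the paper's entire argument is the one-sentence remark preceding the theorem, namely that the ``Another Description'' together with the Kaledin--Verbitsky/Tomberg balancedness criterion \cite[Proposition 4.5]{KV}, \cite[Theorem 1]{tom} immediately gives $d\omega^{\kappa}=0$. Your write-up simply unpacks this citation; the only minor imprecision is that on the $\mathbf{N}$-chart the metric $G=\varphi_1^*g+\varphi_2^*h$ is built from the hyperK\"ahler metric $g$ of $\widetilde{\mathcal{M}}_{\mathrm{Dol}}(X,r)$ rather than that of $\widetilde{\mathcal{M}}_{\mathrm{Dol}}(X',r)$, so the direct application of the criterion there needs care---but this is irrelevant since the $\mathbf{S}$-chart alone is open dense and the identity $d\omega^{\kappa}=0$ then extends by continuity.
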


 \begin{definition}[\cite{CS1}] 
 Let $\mathcal{E}$ be  a holomorphic vector bundle  equipped with a Hermitian metric $H$  over $\mathrm{TW}(X,X';r)$,
 the  \emph{analytic degree} and the \emph{analytic slope} of $(\mathcal{E},H)$ are defined as
 \begin{align*}
   \deg_\omega(\mathcal{E},H)&=\int_{\mathrm{TW}(X,X';1)}c_1(\mathcal{E},H)\wedge \omega^\kappa,\\
   \mu_\omega(\mathcal{E},H)&=\frac{\deg_\omega(\mathcal{E},H)}{\mathrm{rank}(\mathcal{E})},
 \end{align*}
respectively, where
$$
c_1(\mathcal{E},H)=\frac{\sqrt{-1}}{2\pi}\Tr R(\mathcal{E},H)
$$
for the curvature $(1,1)$-form $R(\mathcal{E},H)$ of the Chern connection associated to the metric $H$. Hence,  the notion of stability can be also defined as usual, namely $(\mathcal{E},H)$ is called $\mu_\omega$-\emph{stable} (resp. $\mu_\omega$-\emph{semistable}) if any  proper coherent subsheaf $\mathcal{E}'\subset\mathcal{E}$  with the induced Hermitian metric (also denoted by $H$)  satisfies $\mu_\omega(\mathcal{E}',H)< \mu_\omega(\mathcal{E},H) $ (resp. $\mu_\omega(\mathcal{E}^\prime,H) \leq\mu_\omega(\mathcal{E},H)$), and $\mu_\omega$-\emph{polystable} if $\mathcal{E}$ is the direct sum of stable bundles with the same slope.
 \end{definition}

\begin{proposition}
Let $\mathcal{E}$ be a  holomorphic vector bundle  $\mathcal{E}$ equipped with a Hermitian metric $H$  over the generalized Deligne--Hitchin twistor space $T\mathrm{TW}(X,X';1)$.
 \begin{enumerate}
 \item If there is a de Rham section  $s:\mathbb{P}^1\rightarrow\mathrm{TW}_{\mathrm{DH}}(X,1)$ such that the pullback bundle $s^*\mathcal{E}$ over $\mathbb{P}^1$ is trivial, then  $(\mathcal{E},H)$ is $\mu_\omega$-semistable.
   \item Let $T_\mathrm{rel}\mathrm{TW}(X,X';1)$ be  the relative holomorphic tangent bundle defined by the short exact sequence
$$
0\rightarrow T_\mathrm{rel}\mathrm{TW}(X,X';1)\rightarrow T\mathrm{TW}(X,X';1)\rightarrow \pi^*T\mathbb{P}^1\rightarrow0.
$$
 If $\mathcal{E}$ is a  proper subbundle  of $ T_\mathrm{rel} \mathrm{TW}(X,X';1)$
then we have 
\begin{align*}
    \mu_\omega(\mathcal{E},G)<\mu_\omega(T \mathrm{TW}(X,X';1),G)<2.
\end{align*}
\end{enumerate}
\end{proposition}

\begin{proof}
(1) When $r = 1$,
we have analytic isomorphisms
 $$
 \pi^{-1}(\lambda)\simeq \left\{
                            \begin{array}{ll}
                             (\mathbb{C}^*)^{2g}, & \hbox{$\lambda\in \mathbb{P}^1\backslash\{0,\infty\}$;} \\
                            \textrm{Jac}(X)\times \mathbb{C}^g  , & \hbox{$\lambda=0$;}\\
                            \textrm{Jac}(X')\times \mathbb{C}^g  , & \hbox{$\lambda=\infty$,}
                            \end{array}
                          \right.
$$
 where  $\textrm{Jac}(\bullet)$ is  the Jacobian variety of $\bullet$, thus
there are isomorphisms among the fibers as the real
analytic groups \cite{sim}. Hence the generalized Deligne-Hitchin twistor space is a $C^\infty$-principal
bundle over $\mathbb{P}^1$. The de Rham section $s$ provides a global section, which leads to a global
$C^\infty$-trivialization of this principal bundle.
It follows that we have an isomorphism
$$
H^2(\mathrm{TW}_{\mathrm{DH}}(X,1),\mathbb{R})\simeq H^2(\mathcal{M}_{\mathrm{Dol}}(X,1),\mathbb{R})\oplus H^2(\mathbb{P}^1,\mathbb{R}).
$$
Since   $s^*c_1(\mathcal{E},H)=c_1(s^*\mathcal{E})=0$ and  $c_1(\mathcal{E},H)|_{\pi^{-1}(\lambda)}=c_1(\mathcal{E}|_{\pi^{-1}(\lambda)},H)$ for any $\lambda\in \mathbb{P}^1$, then  by a result of  Verbitsky ({\cite[Lemma 2.1]{Ver2}}), we find that
  $c_1(\mathcal{E},H)\in  H^2(\mathcal{M}_{\mathrm{Dol}}(X,1),\mathbb{R})$ is primitive, thus  $\deg_\omega(\mathcal{E},H)=0$.
 Therefore we only need to show $\deg_\omega(\mathcal{F},H)\leq0$ for any proper saturated coherent subsheaf $\mathcal{F}\subset\mathcal{E}$. Indeed, by the same argument, we have $\deg_\omega(\mathcal{F},H)=\deg(s^*\mathcal{F})\leq0$, where the last inequality is as a result of the triviality  of $s^*\mathcal{E}$.

(2)
Due to Ricci-flatness of the hyperK\"ahler metric $g$, the holomorphic tangent bundle $(T\pi^{-1}(\lambda),g)$ of $\pi^{-1}(\lambda)$ for $\lambda\in \mathbb{P}^1$ is $\mu_g$-semistable \cite{CS1}.
By a result of Kaledin--Verbitsky ({\cite[Lemma 7.3]{KV}}),  $ (T_\mathrm{rel}\mathrm{TW}(X,X';1),G)$ is also $\mu_\omega$-semistable. Let $s$ be a twistor line through $u=((E,\bar\partial_E),0)\in \mathcal{M}_{\mathrm{Dol}}(X,r)$ in $\mathrm{TW}(X,X';1)$, then by Theorem \ref{mmmmm}-(2),  $\deg(N_s)= \kappa$ or $\frac{3}{2}\kappa$ for $\kappa=2g$, therefore
 \begin{align*}
   \mu(T_\mathrm{rel} \mathrm{TW}(X,X';1)|_s)=\frac{\deg( N_s)}{\kappa}&<\mu( T\mathrm{TW}(X,X';1)|_s)\\
   &=\frac{\deg( N_s)+2}{\kappa+1}\leq \frac{3g+2}{2g+1}<2,
 \end{align*}
which implies that
\begin{align*}
  \mu_\omega(\mathcal{E},G)\leq\mu_\omega(T_\mathrm{rel} \mathrm{TW}(X,X';1),G)<\mu_\omega(T \mathrm{TW}(X,X';1),G)<2
\end{align*}
for any  proper subbundle $\mathcal{E}\subset T_\mathrm{rel} \mathrm{TW}(X,X';1)$.
\end{proof}

\section{Automorphism Groups}

\subsection{Automorphism Groups of Hodge Moduli Spaces}

In this subsection we always assume $g\geq 4$ and $r\geq 2$, and we will generalize the result of Biswas and Heller\cite[Theorem 1.1]{BH}  to the case of higher rank and fixed determinant.

\begin{proposition} \footnote{This Proposition is also obtained in {\cite[Corollary 4.3]{Sin}}.}\label{1}
The Picard group $\Pic(\mathcal{M}_\mathrm{Hod}(X,r,\mathcal{O}_X))$ is isomorphic to $\mathbb{Z}$.
\end{proposition}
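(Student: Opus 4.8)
The plan is to reduce the computation to the known Picard group of the moduli space $\mathcal{N}(X,r,\mathcal{O}_X)$ of stable bundles with trivial determinant, by peeling off the $\lambda$-connection data. Throughout I work with the Zariski-dense open subset $\mathcal{M}^s_\mathrm{Hod}(X,r,\mathcal{O}_X)\subset \mathcal{M}_\mathrm{Hod}(X,r,\mathcal{O}_X)$ on which the underlying vector bundle is stable. The argument is a chain of three isomorphisms of Picard groups, following and generalizing the strategy of Biswas--Heller \cite{BH}.

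First I would show that the restriction induces an isomorphism $\Pic(\mathcal{M}_\mathrm{Hod}(X,r,\mathcal{O}_X))\cong \Pic(\mathcal{M}^s_\mathrm{Hod}(X,r,\mathcal{O}_X))$. Since $\mathcal{M}_\mathrm{Hod}(X,r,\mathcal{O}_X)$ is smooth, this holds as soon as the complement $\mathcal{M}_\mathrm{Hod}\setminus\mathcal{M}^s_\mathrm{Hod}$ has codimension at least two, because removing a closed subset of codimension $\geq 2$ from a smooth variety leaves the Picard group unchanged. This complement parametrizes those stable $\lambda$-flat bundles whose underlying bundle is unstable; its codimension is governed by the codimension estimate of Biswas--G\'omez--Logares \cite{BGL} for the Dolbeault fibre, propagated over the base via the fibration $\pi$ to $\mathbb{C}$. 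This is precisely where the standing hypotheses $g\geq 4$ and $r\geq 2$ enter.

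Next I would analyze the structure of $\mathcal{M}^s_\mathrm{Hod}(X,r,\mathcal{O}_X)$. Forgetting the $\lambda$-connection and recording $\lambda$ defines a morphism $\mathcal{M}^s_\mathrm{Hod}(X,r,\mathcal{O}_X)\to \mathcal{N}(X,r,\mathcal{O}_X)\times\mathbb{C}$ sending $(E,D^\lambda)\mapsto([E],\lambda)$. Since $X$ is a curve there is no integrability constraint on $D^\lambda$, and for a fixed stable $E$ (where $\mathrm{Aut}(E)=\mathbb{C}^*$ acts trivially by conjugation) the fibre over $([E],\lambda_0)$ is the affine space of trace-compatible flat $\lambda_0$-connections, a torsor under $H^0(X,\mathrm{ad}(E)\otimes K_X)$. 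Hence this morphism is an affine bundle, namely a torsor under a vector bundle, so it is Zariski-locally trivial with affine-space fibres; consequently pullback yields $\Pic(\mathcal{N}(X,r,\mathcal{O}_X)\times\mathbb{C})\cong\Pic(\mathcal{M}^s_\mathrm{Hod}(X,r,\mathcal{O}_X))$. As $\mathbb{C}=\mathbb{A}^1$ and $\mathcal{N}(X,r,\mathcal{O}_X)$ is smooth, we further have $\Pic(\mathcal{N}(X,r,\mathcal{O}_X)\times\mathbb{C})\cong\Pic(\mathcal{N}(X,r,\mathcal{O}_X))$. Finally, invoking the classical theorem of Dr\'ezet--Narasimhan that $\Pic(\mathcal{N}(X,r,\mathcal{O}_X))\cong\mathbb{Z}$, generated by the determinant (theta) line bundle, and composing the three isomorphisms gives $\Pic(\mathcal{M}_\mathrm{Hod}(X,r,\mathcal{O}_X))\cong\mathbb{Z}$.

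I expect the main obstacle to be the codimension estimate in the first step: one must verify that the locus of $\lambda$-flat bundles with unstable underlying bundle has codimension at least two throughout the entire Hodge moduli space (for all $\lambda$ simultaneously), not merely within the single Dolbeault fibre over $0$. This is exactly the point that forces $g\geq 4$ and where the input \cite{BGL} is indispensable; the affine-bundle and Dr\'ezet--Narasimhan steps are then essentially formal.
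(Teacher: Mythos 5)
Your argument is correct and follows the same skeleton as the paper's proof --- pass to the open locus $\mathcal{M}^s_\mathrm{Hod}(X,r,\mathcal{O}_X)$ using the codimension estimate for its complement, reduce to $\mathcal{N}(X,r,\mathcal{O}_X)$, and finish with Drezet--Narasimhan --- but the middle reduction is done by a genuinely different device. The paper invokes, twice, the exact sequence $\mathrm{Id}\to\Pic(S)\to\Pic(\chi)\to\Pic(\chi/S)(S)\to\mathrm{Br}(S)$ for a fibration with geometrically connected fibers: first for $\pi_s:\mathcal{M}^s_\mathrm{Hod}(X,r,\mathcal{O}_X)\to\mathbb{C}$ (where $\Pic(\mathbb{C})$ and $\mathrm{Br}(\mathbb{C})$ are trivial, so $\Pic$ of the total space is the group of sections of the relative Picard scheme over $\mathbb{C}$), and then for each fiber $\pi_s^{-1}(\lambda)\to\mathcal{N}(X,r,\mathcal{O}_X)$, which is $T^*\mathcal{N}(X,r,\mathcal{O}_X)$ at $\lambda=0$ and a torsor under it for $\lambda\neq0$, giving $\Pic(\pi_s^{-1}(\lambda))\simeq\Pic(\mathcal{N}(X,r,\mathcal{O}_X))\simeq\mathbb{Z}$ and hence only constant sections of the relative Picard scheme. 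You instead package the whole of $\mathcal{M}^s_\mathrm{Hod}(X,r,\mathcal{O}_X)$ as a single torsor under the pullback of $T^*\mathcal{N}(X,r,\mathcal{O}_X)$ to $\mathcal{N}(X,r,\mathcal{O}_X)\times\mathbb{C}$ and use Zariski-local triviality together with homotopy invariance of $\Pic$. This buys you a more elementary argument that avoids the relative Picard scheme and the Brauer group, at the cost of two points you should make explicit: (i) surjectivity of the forgetful map, i.e.\ every stable bundle of degree zero admits a flat $\lambda$-connection inducing $\lambda\cdot d$ on the trivialized determinant (Atiyah--Weil on a curve, followed by a trace adjustment), so the pseudo-torsor is a torsor; and (ii) the fact that a torsor under a vector bundle is classified by coherent $H^1$ and is therefore Zariski-locally trivial, which is what lets you conclude $\Pic(\mathcal{M}^s_\mathrm{Hod}(X,r,\mathcal{O}_X))\simeq\Pic(\mathcal{N}(X,r,\mathcal{O}_X)\times\mathbb{C})$ over the smooth base. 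Your observation that codimension $\geq2$ of $\mathcal{M}_\mathrm{Hod}(X,r,\mathcal{O}_X)\setminus\mathcal{M}^s_\mathrm{Hod}(X,r,\mathcal{O}_X)$ suffices is correct (the paper quotes the stronger bound $\geq3$ from the cited references), and the final appeal to $\Pic(\mathcal{N}(X,r,\mathcal{O}_X))\simeq\mathbb{Z}$ coincides with the paper's.
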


\begin{proof}
Let  $\mathring{\mathcal{M}}_\mathrm{Hod}(X,r,\mathcal{O}_X)=\mathcal{M}_\mathrm{Hod}(X,r,\mathcal{O}_X)\backslash \mathcal{M}^s_\mathrm{Hod}(X,r,\mathcal{O}_X)$. It is known that  the codimension of $\mathring{\mathcal{M}}_\mathrm{Hod}(X,r,\mathcal{O}_X)$ in $\mathcal{M}_\mathrm{Hod}(X,r,\mathcal{O}_X)$ is at least 3 \cite{BM,BGL},
  which implies that the pullback morphism provides an isomorphism between Picard groups $\Pic(\mathcal{M}_\mathrm{Hod}(X,r,\mathcal{O}_X))$ and $\Pic(\mathcal{M}^s_\mathrm{Hod}(X,r,\mathcal{O}_X))$.

   Recall that for an algebraic fibration $f:\chi\rightarrow S$ of varieties with geometric connected fibers,  there is an exact sequence
   $$\mathrm{Id}\rightarrow \Pic(S)\rightarrow \Pic(\chi)\rightarrow \Pic(\chi/S)(S)\rightarrow \mathrm{Br}(S),$$
  where $\Pic(\chi/S)(S)$ is the group of sections of the relative Picard scheme $\Pic(\chi/S)$ and $\mathrm{Br}(S)$ denotes the Brauer group of the base $S$. For our case, the fibers of $\pi:\mathcal{M}_\mathrm{Hod}(X,r,\mathcal{O}_X)\rightarrow \mathbb{C}$ are irreducible varieties \cite{CS6}, $\Pic(\mathbb{C})$ and  $\mathrm{Br}(\mathbb{C})$ are both trivial. Hence, there are isomorphisms
  $$
  \Pic(\mathcal{M}_\mathrm{Hod}(X,r,\mathcal{O}_X))\simeq \Pic(\mathcal{M}^s_\mathrm{Hod}(X,r,\mathcal{O}_X))\simeq \Pic(\mathcal{M}^s_\mathrm{Hod}(X,r,\mathcal{O}_X)/\mathbb{C})(\mathbb{C}).
  $$
  Denote the restriction $\pi|_{\mathcal{M}^s_\mathrm{Hod}(X,r,\mathcal{O}_X)}: \mathcal{M}^s_\mathrm{Hod}(X,r,\mathcal{O}_X)\rightarrow\mathbb{C}$ by $\pi_s$.
  Then the fiber $\pi_s^{-1}(0)$ is canonically isomorphic to the cotangent bundle $T^*\mathcal{N}(X,r,\mathcal{O}_X)$ of $\mathcal{N}(X,r,\mathcal{O}_X)$, and the fiber $\pi_s^{-1}(\lambda)$ for any $\lambda\in \mathbb{P}^1\backslash\{0,\infty\}$ is a torsor associated to $T^*\mathcal{N}(X,r,\mathcal{O}_X)$.
  Again by the above exact sequence, we have
  $$\Pic(\pi_s^{-1}(\lambda))\simeq \Pic(\mathcal{N}(X,r,\mathcal{O}_X))$$
  for any $\lambda\in\mathbb{P}^1\backslash\{\infty\}$.
  From a Drezet--Narasimhan's famous result \cite{DN}, which states  that $\Pic(\mathcal{N}(X,r,\mathcal{O}_X))\simeq \mathbb{Z}$, it immediately follows that $\Pic(\mathcal{M}_\mathrm{Hod}(X,r,\mathcal{O}_X))\simeq\mathbb{Z}$.
\end{proof}

\begin{lemma}
Let $\Aut\Big(\mathcal{M}_\mathrm{Hod}(X,r,\mathcal{O}_X),\mathcal{M}^s_\mathrm{Hod}(X,r,\mathcal{O}_X)\Big)$ be the subgroup of $\Aut(\mathcal{M}_\mathrm{Hod}(X,r,\mathcal{O}_X))$ consisting of the automorphisms which preserve both $\mathcal{M}^s_\mathrm{Hod}(X,r,\mathcal{O}_X)$ and $\mathring{\mathcal{M}}_\mathrm{Hod}(X,r,\mathcal{O}_X)$. Then we have an isomorphism of goups
$$
\Aut\Big(\mathcal{M}_\mathrm{Hod}(X,r,\mathcal{O}_X),\mathcal{M}^s_\mathrm{Hod}(X,r,\mathcal{O}_X)\Big)\simeq \Aut(\mathcal{M}^s_\mathrm{Hod}(X,r,\mathcal{O}_X)).
$$
\end{lemma}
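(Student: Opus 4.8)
The plan is to show that the natural restriction homomorphism
$$\rho:\Aut\big(\mathcal{M}_\mathrm{Hod}(X,r,\mathcal{O}_X),\mathcal{M}^s_\mathrm{Hod}(X,r,\mathcal{O}_X)\big)\longrightarrow \Aut\big(\mathcal{M}^s_\mathrm{Hod}(X,r,\mathcal{O}_X)\big),\qquad f\longmapsto f|_{\mathcal{M}^s_\mathrm{Hod}(X,r,\mathcal{O}_X)},$$
is a group isomorphism. It is well defined because an automorphism $f$ in the source preserves $\mathcal{M}^s_\mathrm{Hod}$ by hypothesis, so $f|_{\mathcal{M}^s_\mathrm{Hod}}$ is an automorphism with inverse $(f^{-1})|_{\mathcal{M}^s_\mathrm{Hod}}$. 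Injectivity is the first, easy step: $\mathcal{M}^s_\mathrm{Hod}$ is a dense open subset of the reduced separated variety $\mathcal{M}_\mathrm{Hod}$, and two morphisms from a reduced scheme to a separated scheme that agree on a dense open subset agree everywhere; hence $f|_{\mathcal{M}^s_\mathrm{Hod}}=\mathrm{id}$ forces $f=\mathrm{id}$.

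The real content is surjectivity: given $\phi\in\Aut(\mathcal{M}^s_\mathrm{Hod})$, I must extend it to an automorphism of $\mathcal{M}_\mathrm{Hod}$. I will use throughout that $\mathring{\mathcal{M}}_\mathrm{Hod}=\mathcal{M}_\mathrm{Hod}\setminus\mathcal{M}^s_\mathrm{Hod}$ has codimension at least $3$ (recorded above, from \cite{BM,BGL}) and that $\mathcal{M}_\mathrm{Hod}$ is smooth, hence normal. View $\phi$ as a birational self-map $\Phi:\mathcal{M}_\mathrm{Hod}\dashrightarrow\mathcal{M}_\mathrm{Hod}$; by construction $\Phi$ is an isomorphism over $\mathcal{M}^s_\mathrm{Hod}$, so its indeterminacy locus lies in $\mathring{\mathcal{M}}_\mathrm{Hod}$, and likewise for $\Phi^{-1}$. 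Once both $\Phi$ and $\Phi^{-1}$ are known to extend to morphisms, the extensions are mutually inverse (they are inverse on the dense open $\mathcal{M}^s_\mathrm{Hod}$ and the target is separated), so $\Phi$ becomes an automorphism $\tilde\phi$ of $\mathcal{M}_\mathrm{Hod}$; since $\tilde\phi$ is a bijection restricting to the bijection $\phi$ of $\mathcal{M}^s_\mathrm{Hod}$, it carries $\mathcal{M}^s_\mathrm{Hod}$ onto itself and therefore $\mathring{\mathcal{M}}_\mathrm{Hod}$ onto itself, placing $\tilde\phi$ in the source of $\rho$ with $\rho(\tilde\phi)=\phi$. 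Thus everything reduces to eliminating the indeterminacy of $\Phi$.

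The main obstacle is precisely this extension step, for which the codimension bound alone is not sufficient and one must feed in the extra rigidity available here. My plan is to resolve $\Phi$ through its graph closure $\Gamma\subset\mathcal{M}_\mathrm{Hod}\times\mathcal{M}_\mathrm{Hod}$, with projections $p,q$, and after normalization to invoke purity of the exceptional locus: a proper birational morphism onto the smooth variety $\mathcal{M}_\mathrm{Hod}$ has exceptional locus of pure codimension one. Any exceptional divisor $D$ would be contracted by $p$ into $\mathring{\mathcal{M}}_\mathrm{Hod}$ (codimension $\ge 3$) and, because $\Phi^{-1}$ is defined over $\mathcal{M}^s_\mathrm{Hod}$, also mapped by $q$ into $\mathring{\mathcal{M}}_\mathrm{Hod}$. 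To convert this into a contradiction I will use that $\phi$ must preserve the ample generator of $\Pic(\mathcal{M}^s_\mathrm{Hod})\cong\Pic(\mathcal{M}_\mathrm{Hod})\cong\mathbb{Z}$ furnished by Proposition~\ref{1} (Drezet--Narasimhan), so that $\Phi$ is a \emph{polarized} birational self-map, and combine this with properness of the Hitchin-type map of $\mathcal{M}_\mathrm{Hod}$ to rule out a contracted exceptional divisor landing entirely inside the small boundary $\mathring{\mathcal{M}}_\mathrm{Hod}$. This forces the indeterminacy of $\Phi$ to be empty, so $\Phi$ is a morphism, and the reduction of the previous paragraph then finishes the proof. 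I expect the genuinely delicate point to be exactly the interaction between the purity statement and the preservation of the polarization, namely excluding exceptional divisors that escape into the codimension $\ge 3$ locus.
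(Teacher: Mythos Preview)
Your setup (injectivity from density, reduction of surjectivity to eliminating indeterminacy of the birational self-map $\Phi$, and the key inputs $\mathrm{codim}\,\mathring{\mathcal{M}}_\mathrm{Hod}\ge 3$ and $\Pic\cong\mathbb{Z}$) matches the paper. The difference, and the gap, is in how you close the extension step. You propose to resolve $\Phi$ via the graph closure, invoke purity of the exceptional locus, and then ``combine this with properness of the Hitchin-type map'' to rule out an exceptional divisor landing in $\mathring{\mathcal{M}}_\mathrm{Hod}$ under both projections. You yourself flag this last step as the delicate one, and indeed it is not carried out: there is no Hitchin map on $\mathcal{M}_\lambda$ for $\lambda\neq 0$, and it is unclear what properness statement you mean or how it would exclude such a divisor. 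A negativity-lemma argument along these lines can sometimes be made to work, but on a quasi-projective variety it requires real care, and nothing in your outline supplies it.

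The paper avoids this machinery entirely. Having observed that $\phi$ fixes the very ample generator $\Lambda$ of $\Pic(\mathcal{M}_\mathrm{Hod})\cong\mathbb{Z}$, it uses Hartogs (valid since the complement has codimension $\ge 2$) to identify $H^0(\mathcal{M}_\mathrm{Hod},\Lambda)\simeq H^0(\mathcal{M}^s_\mathrm{Hod},\Lambda)$; then $\phi$ induces a linear automorphism of $H^0(\mathcal{M}_\mathrm{Hod},\Lambda)$, hence of the ambient projective space $\mathbb{P}(H^0(\mathcal{M}_\mathrm{Hod},\Lambda))$, and this linear automorphism directly restricts to the sought automorphism of $\mathcal{M}_\mathrm{Hod}$ extending $\phi$. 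No graph resolution or exceptional-divisor analysis is needed. Since you already have the polarization-preservation ingredient in hand, you can replace your last paragraph with this two-line argument.
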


\begin{proof}
Since $\mathcal{M}^s_\mathrm{Hod}(X,r,\mathcal{O}_X)$ is a dense open subset of $\mathcal{M}_\mathrm{Hod}(X,r,\mathcal{O}_X)$. We only need to show that the restriction morphism $\Aut(\mathcal{M}_\mathrm{Hod}(X,r,\mathcal{O}_X),\mathcal{M}^s_\mathrm{Hod}(X,r,\mathcal{O}_X))\rightarrow \Aut(\mathcal{M}^s_\mathrm{Hod}(X,r,\mathcal{O}_X))$
 via $\sigma\mapsto \sigma|_Y$ is surjective, namely
 any automorphism of $\mathcal{M}^s_\mathrm{Hod}(X,r,\mathcal{O}_X)$ can be lifted to an automorphism of $\mathcal{M}_\mathrm{Hod}(X,r,\mathcal{O}_X)$.
 Let $\sigma$ be an elements of $\Aut(\mathcal{M}^s_\mathrm{Hod}(X,r,\mathcal{O}_X))$, which can be viewed as a pseudo-automorphism of $\mathcal{M}_\mathrm{Hod}(X,r,\mathcal{O}_X)$. Then $\sigma$ induces an automorphism of $\Pic(\mathcal{M}_\mathrm{Hod}(X,r,\mathcal{O}_X))$, by Proposition \ref{1}, it must be the identity. Choose  $\Lambda$ to be a very ample line bundle on $\mathcal{M}_\mathrm{Hod}(X,r,\mathcal{O}_X)$, since $H^0(\mathcal{M}_\mathrm{Hod}(X,r,\mathcal{O}_X),\Lambda)\simeq H^0(\mathcal{M}^s_\mathrm{Hod}(X,r,\mathcal{O}_X),\Lambda|_{\mathcal{M}^s_\mathrm{Hod}(X,r,\mathcal{O}_X)})$  due to Hartog's  theorem, $\sigma$ induces an isomorphism of $H^0(\mathcal{M}_\mathrm{Hod}(X,r,\mathcal{O}_X),\Lambda)$, hence an automorphism of $\mathbb{P}(H^0(\mathcal{M}_\mathrm{Hod}(X,r,\mathcal{O}_X),\Lambda))$, which gives rise to  an automorphism of $\mathcal{M}_\mathrm{Hod}(X,r,\mathcal{O}_X)$.
 \end{proof}

\begin{theorem}
The group $\Aut(\mathcal{M}_\mathrm{Hod}(X,r,\mathcal{O}_X),\mathcal{M}^s_\mathrm{Hod}(X,r,\mathcal{O}_X))$ satisfies the following short exact sequence
$$
\mathrm{Id}\rightarrow\mathbb{V}\rightarrow\Aut(\mathcal{M}_\mathrm{Hod}(X,r,\mathcal{O}_X),\mathcal{M}^s_\mathrm{Hod}(X,r,\mathcal{O}_X))\rightarrow \Aut(\mathcal{N}(X,r,\mathcal{O}_X))\times \mathbb{C}^*\rightarrow \mathrm{Id},
$$
where the abelian group  $\mathbb{V}$ is  isomorphic to $\mathrm{Mor}(\mathbb{C},H^0(\mathcal{N}(X,r,\mathcal{O}_X),T^*\mathcal{N}(X,r,\mathcal{O}_X)))$ consisting of the algebraic morphisms from $\mathbb{C}$ to the space of holomorphic 1-forms  on $T^*\mathcal{N}(X,r,\mathcal{O}_X)$.
\end{theorem}

\begin{proof}
Let $\sigma$ be an automorphism of $\mathcal{M}_\mathrm{Hod}(X,r,\mathcal{O}_X)$. Denote the following composition
$$
\pi^{-1}(\lambda)\xrightarrow{\sigma}\mathcal{M}_\mathrm{Hod}(X,r,\mathcal{O}_X)\xrightarrow{\pi}\mathbb{C}
$$
by $\tilde \sigma_\lambda:\pi^{-1}(\lambda)\rightarrow \mathbb{C}$.
If $\lambda\in \mathbb{P}^1\backslash\{0,\infty\}$, it has been shown that there are no nonconstant algebraic functions on $ \mathcal{M}_\lambda(X,r,\mathcal{O}_X)$ \cite{CS3,Sin}. Hence, $\tilde \sigma_\lambda$ is a constant map, in other words, $\sigma$ maps the fiber $\pi^{-1}(\lambda)$ to another fiber $\pi^{-1}(\lambda^\prime)$ for some $\lambda'\in \mathbb{C}$. By Riemann--Hilbert correspondence,  $ \mathcal{M}_{\mathrm{dR}}(X,r,\mathcal{O}_X)$ is biholomorphic to the affine variety  $ \mathcal{M}_{\mathrm{B}}(X,\mathrm{SL}(2,\mathbb{C}))$, hence it does not contain any compact submanifold of positive dimension, however, by properness of Hitchin map \cite{NH,hit,CS6},    $ \mathcal{M}_\mathrm{Dol}(X,r,\mathcal{O}_X)$ contains many compact submanifolds of positive dimensions, namely, $\pi^{-1}(\lambda)$
is not biholomorphic to $\pi^{-1}(0)$ for any $\lambda\in \mathbb{P}^1\backslash\{0,\infty\}$.
It follows that  $\lambda'$ actually belongs to $\mathbb{P}^1\backslash\{0,\infty\}$, and $\sigma$ preserves the fiber $\pi^{-1}(0)$.  Therefore, $\sigma$ induces an algebraic automorphism $\tau_\sigma:\mathbb{C}\rightarrow \mathbb{C}$  satisfying \begin{itemize}
\item  $\pi\circ\sigma=\tau_\sigma\circ\pi$,
\item $\tau_\sigma(0)=0$,
\end{itemize}
  namely, $\tau$ is expressed as $\tau_\sigma(z)=\varepsilon_\sigma\cdot z$ for some $\varepsilon_\sigma\in\mathbb{C}^*$.

 We define $\mathcal{M}^{s'}_\mathrm{Hod}(X,r,\mathcal{O}_X)=\bigcup\limits_{\lambda\in \mathbb{P}^1\backslash\{0,\infty\}}\pi^{-1}_s(\lambda)$, it admits a fibration $\varpi: \mathcal{M}^{s'}_\mathrm{Hod}(X,r,\mathcal{O}_X)  \rightarrow\mathcal{N}(X,r,\mathcal{O}_X)$. Treating an  automorphism $\sigma\in \mathrm{Aut}(\mathcal{M}^s_\mathrm{Hod}(X,r,\mathcal{O}_X))$  as an automorphism of $\mathcal{M}^{s'}_\mathrm{Hod}(X,r,\mathcal{O}_X)$, 
there is a map $\eta: \mathbb{C}\rightarrow \Aut(\mathcal{N}(X,r,\mathcal{O}_X))$ such that the following diagram commutes
  $$\CD
   \mathcal{M}^{s'}_\mathrm{Hod}(X,r,\mathcal{O}_X) @>\sigma>>  \mathcal{M}^{s'}_\mathrm{Hod}(X,r,\mathcal{O}_X)\\
    @V \varpi VV @V  \varpi VV  \\
    \mathcal{N}(X,r,\mathcal{O}_X) @>\eta_\sigma>> \mathcal{N}(X,r,\mathcal{O}_X).
  \endCD$$
  Indeed,  pick a point $e\in\mathcal{N}(X,r,\mathcal{O}_X)$, then for any  point $u\in \varpi^{-1}(e)$, the limit $\lim\limits_{c\rightarrow0}c\cdot u$ is exactly $e$. Hence $\sigma$ maps the fiber of $\varpi$ to another fiber since $\sigma$ commutes with the $\mathbb{C^*}$-action on $\mathcal{M}^{s'}_\mathrm{Hod}(X,r,\mathcal{O}_X)$,  namely  the composition
   $$
   \eta(\lambda):=\varpi\circ\wp(\sigma_s(\lambda))\circ \varpi^{-1}: \mathcal{N}(X,r,\mathcal{O}_X)\rightarrow \mathcal{N}(X,r,\mathcal{O}_X)
   $$
   is well-defined.
Consequently, we obtain a group homomorphism
   \begin{align*}
    \mathrm{\Theta}: \Aut(\mathcal{M}^s_\mathrm{Hod}(X,r,\mathcal{O}_X))&\longrightarrow \Aut(\mathcal{N}(X,r,\mathcal{O}_X))\times \mathbb{C}^*\\
    \sigma&\longmapsto (\eta_\sigma,\varepsilon_\sigma).
   \end{align*}

   The automorphism group
   $\Aut(\mathcal{N}(X,r,\mathcal{O}_X))$ has been studied well. More concretely, there are two approaches of producing automorphisms of $\Aut(\mathcal{N}(X,r,\mathcal{O}_X))$ \cite{KP}:
   \begin{itemize}
     \item for $\ell\in \Aut(X)$, and $L\in \Pic^0(X)$ with $L^r\simeq \mathcal{O}_X$, send $E\in \mathcal{N}(X,r,\mathcal{O}_X)$ to $L\otimes \ell^*E$;
     \item for  $\ell\in \Aut(X)$, and $L\in \Pic^0(X)$ with $L^r\simeq \mathcal{O}_X$, send $E\in \mathcal{N}(X,r,\mathcal{O}_X)$ to $L\otimes \ell^*E^\vee$.
   \end{itemize}
   In other words, $\Aut(\mathcal{N}(X,r,\mathcal{O}_X))$ satisfies the following short exact sequence
    $$
   \mathrm{Id}\longrightarrow\mathcal{G}\longrightarrow\Aut(\mathcal{N}(X,r,\mathcal{O}_X))\longrightarrow\mathbb{Z}/2\mathbb{Z}\longrightarrow\mathrm{Id},
   $$
   where the subgroup $\mathcal{G}$ fits in the short exact sequence
   $$
   \mathrm{Id}\longrightarrow \Pic^0(X)_r\longrightarrow\mathcal{G}\longrightarrow\Aut(X)\longrightarrow \mathrm{Id}
   $$
   for $\Pic^0(X)_r$ consisting of the $r$-torsion points in $\Pic^0(X)$.
   
   To show this homomorphism is surjective, we only need to note that there ia a natural $\mathbb{C}^*$-action on $\mathcal{M}^s_\mathrm{Hod}(X,r,\mathcal{O}_X)$, and an elements of $\Aut(\mathcal{N}(X,r,\mathcal{O}_X))$ can be extended to an automorphism of $\mathcal{M}^s_\mathrm{Hod}(X,r,\mathcal{O}_X)$ since there are natural ways to define the tensor product, pullback and dual on $\lambda$-connections.

   Finally, we consider the kernel of the homomorphism $\mathrm{\Theta}$.   Assume $\sigma\in \Ker(\mathrm{\Theta})$, then $\sigma(\pi_s^{-1}(\lambda))=\pi_s^{-1}(\lambda)$ and $\sigma$ preserves the fibers of $\varpi: \mathcal{M}^{s'}_\mathrm{Hod}(X,r,\mathcal{O}_X)  \rightarrow\mathcal{N}(X,r,\mathcal{O}_X)$.  Therefor we have a map $$\rho^\sigma_\lambda:\pi_s^{-1}(\lambda)\rightarrow\mathcal{A}=\bigoplus\limits_{i=1}^rH^0(X,K_X^{\otimes i})$$ defined by the composition
   \begin{align*}
    (E,D^\lambda)&\mapsto (E, \sigma(D^\lambda)-D^\lambda)\mapsto(E, h_E(\sigma(D^\lambda)-D^\lambda)),
   \end{align*}
   where  $(\sigma(D^\lambda)-D^\lambda)\in H^0(X,\mathrm{ad}(E)\otimes K_X)$ for  the subbundle $\mathrm{ad}(E)$ of $\End(E)$ consisting of trace-free endomorphisms of $E$, and $h_E:H^0(X,\mathrm{ad}(E)\otimes K_X)\rightarrow\mathcal{A}$ is the Hitchin map given by
   $\vartheta\mapsto(\Tr(\vartheta^2),\cdots, \Tr(\vartheta^r))$ \cite{hit}.
   When $\lambda\in\mathbb{P}^1\backslash\{0,\infty\}$,  since $\mathrm{codim}_\mathbb{C}(\pi^{-1}(\lambda)\backslash\pi_s^{-1}(\lambda))\geq 3$, there are also no nonconstant algebraic functions on $\pi_s^{-1}(\lambda)$. Hence $\rho^\sigma_\lambda$ is a constant map,
   and then the image is denoted by $\tilde\rho^\sigma_\lambda$. Thus each  $\sigma\in \Ker(\mathrm{\Theta})$ gives rise to  a morphism
   \begin{align*}
   \rho: \mathbb{C}^*&\longrightarrow\mathcal{A}\\
   \lambda&\longmapsto\tilde\rho^\sigma_\lambda,
   \end{align*}
   which can be extended to the entire $\mathbb{C}$. Moreover, since $\dim_\mathbb{C}H^0(X,\mathrm{ad}(E)\otimes K_X)=\dim_\mathbb{C}\mathcal{A}=(r^2-1)(g-1)$ \cite{hit}, the map $(E,D^\lambda)\mapsto(E, \sigma(D^\lambda)-D^\lambda)$ is independent of $D^\lambda$, hence yields a section $\tilde\sigma_\lambda$ of $T^*\mathcal{N}(X,r,\mathcal{O}_X)$ by Serre duality. Consequently, we obtain a morphism
   \begin{align*}
 \varrho_\sigma: \mathbb{C}^*&\longrightarrow H^0(\mathcal{N}(X,r,\mathcal{O}_X),T^*\mathcal{N}(X,r,\mathcal{O}_X))\\
   \lambda&\longmapsto\tilde\sigma_\lambda,
   \end{align*}
   which provides an isomorphism $\Ker(\mathrm{\Theta})\simeq \mathrm{Mor}(\mathbb{C},H^0(\mathcal{N}(X,r,\mathcal{O}_X),T^*\mathcal{N}(X,r,\mathcal{O}_X))$ via $\sigma\mapsto\varrho_\sigma$. We complete the proof.

\end{proof}

\subsection{Automorphism Groups of Generalized Deligne--Hitchin Twistor Spaces}

In \cite{BHR}, Biswas, Heller and R\"oser proved  that  the automorphism of the Deligne--Hitchin twistor space which is homotopic to the identity automorphism maps  fibers of the fibration $\mathrm{TW}_{\mathrm{DH}}(X,r)\to\mathbb{P}^1$ to fibers. In the following, we will generalize this property to the generalized Deligne--Hitchin twistor space $\mathrm{TW}(X,X';r)$.

\begin{theorem}\label{auto}
Let $\Aut_0(\mathrm{TW}(X,X^\prime;r))$ be the identity component of the holomorphic automorphism  group $\Aut(\mathrm{TW}(X,X';r))$ of $\mathrm{TW}(X,X';r)$, then it satisfies the following short exact sequence
$$
\mathrm{Id}\longrightarrow\mathfrak{K}\longrightarrow\Aut_0(\mathrm{TW}(X,X';r))\longrightarrow\mathbb{C}^*\longrightarrow\mathrm{Id},
$$
where each element of $\mathfrak{K}$ preserves the fiber of the fibration $\pi:\mathrm{TW}(X,X';r)\rightarrow\mathbb{P}^1$.
\end{theorem}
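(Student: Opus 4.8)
The plan is to prove the sharper statement that every $\Phi\in\Aut_0(\mathrm{TW}(X,X';r))$ descends along $\pi$, i.e.\ there is $\bar\Phi\in\Aut(\mathbb{P}^1)$ with $\pi\circ\Phi=\bar\Phi\circ\pi$; this is precisely the assertion that $\Phi$ maps fibers to fibers, and I will produce $\bar\Phi$ in the explicit form $\bar\Phi(\lambda)=c\lambda$. Granting this, the homomorphism $\Aut_0(\mathrm{TW}(X,X';r))\to\mathbb{C}^*,\ \Phi\mapsto c(\Phi)$ is defined by $\pi\circ\Phi=c(\Phi)\cdot\pi$; its kernel is exactly the set of automorphisms with $\pi\circ\Phi=\pi$, that is, those preserving every fiber, which is the subgroup $\mathfrak{K}$; and surjectivity follows from the natural $\mathbb{C}^*$-action on $\mathrm{TW}(X,X';r)$ of Section~3, which lies in $\Aut_0$ and covers $\lambda\mapsto t\lambda$ on $\mathbb{P}^1$, so that $c(t\cdot(-))=t$. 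Thus the whole theorem reduces to the descent.

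For the descent I first record that the two fibers $\pi^{-1}(0)=\mathcal{M}_{\mathrm{Dol}}(X,r)$ and $\pi^{-1}(\infty)=\mathcal{M}_{\mathrm{Dol}}(X',r)$ each contain the nilpotent cone $\mathcal{N}$ (resp.\ $\mathcal{N}'$) of the Hitchin map, a connected compact subvariety of dimension $\tfrac12\kappa=r^2(g-1)+1\geq2$, whereas every other fiber is biholomorphic to the affine variety $\mathcal{M}_{\mathrm{B}}(\mathcal{X},r)$ and hence contains no positive-dimensional compact subvariety. Since $\Phi$ is a biholomorphism, $\Phi(\mathcal{N})$ is again compact, connected and of dimension $\geq2$; were $\pi|_{\Phi(\mathcal{N})}$ nonconstant it would surject onto $\mathbb{P}^1$ with finite general fibers, forcing $\dim\Phi(\mathcal{N})\leq1$, a contradiction. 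Hence $\Phi(\mathcal{N})$ lies in a single fiber, necessarily a Dolbeault one; and because $\Phi$ lies in the identity component, a path to the identity (along which the point $\pi(\Phi_t(\mathcal{N}))$ stays in the discrete set $\{0,\infty\}$) shows $\Phi(\mathcal{N})\subset\pi^{-1}(0)$ and $\Phi(\mathcal{N}')\subset\pi^{-1}(\infty)$. Now through every point of a generic fiber passes a de Rham section $s$, and these are genuine holomorphic sections of $\pi$ whose endpoints $s(0)\in\mathcal{N}$ and $s(\infty)\in\mathcal{N}'$ are $\mathbb{C}$-VHS; since $\Phi$ acts trivially on $H_2$ and $\pi_*[s]=[\mathbb{P}^1]$, the composite $\bar\Phi_s:=\pi\circ\Phi\circ s$ has degree one, hence is an automorphism of $\mathbb{P}^1$, and by the previous sentence $\bar\Phi_s(0)=0$, $\bar\Phi_s(\infty)=\infty$, so $\bar\Phi_s(\lambda)=c_s\lambda$. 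In particular $\pi\circ\Phi$ maps $\pi^{-1}(\mathbb{C}^*)$ into $\mathbb{C}^*$, and applying the same to $\Phi^{-1}$ gives $\Phi(\pi^{-1}(0))=\pi^{-1}(0)$ and $\Phi(\pi^{-1}(\infty))=\pi^{-1}(\infty)$.

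The heart of the argument is then to see that $c_s$ is independent of $s$. Because $\Phi$ preserves the reduced divisors $\pi^{-1}(0)$ and $\pi^{-1}(\infty)$ with their multiplicities, the meromorphic function $h:=(\pi\circ\Phi)/\pi$ has trivial divisor and therefore extends to a global nowhere-vanishing holomorphic function on $\mathrm{TW}(X,X';r)$. Crucially, the de Rham sections are exactly the closures of the orbits of the $\mathbb{C}^*$-action (over $\mathbb{C}$ one has $s_{\lambda_0}(\lambda)=((E,\bar\partial_E),\lambda\lambda_0^{-1}D^{\lambda_0})$), and along each such section $h\equiv\bar\Phi_s(\lambda)/\lambda=c_s$; hence $h$ is $\mathbb{C}^*$-invariant. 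Its restriction to $\pi^{-1}(0)=\mathcal{M}_{\mathrm{Dol}}(X,r)$ is thus a $\mathbb{C}^*$-invariant holomorphic function; since the downward flow $\lim_{t\to0}t\cdot(-)$ retracts $\mathcal{M}_{\mathrm{Dol}}(X,r)$ onto its fixed locus and the open dense stratum flows to a connected fixed component, such a function is forced to be constant, say $\equiv c$. Finally, every de Rham section meets $\pi^{-1}(0)$, so $c_s=h(s(0))=c$ for all $s$; as these sections sweep out $\pi^{-1}(\mathbb{C}^*)$ we conclude $h\equiv c$, that is $\pi\circ\Phi=c\,\pi$. This is the desired descent with $\bar\Phi(\lambda)=c\lambda$.

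The main obstacle is exactly this last step: upgrading the fiberwise scalars $c_s$ to a single constant. The device that makes it work — passing to the global unit $h=(\pi\circ\Phi)/\pi$ and exploiting that the de Rham sections are $\mathbb{C}^*$-orbit closures, so that $h$ is $\mathbb{C}^*$-invariant — is where the de Rham sections are indispensable, and is precisely the point at which the method of \cite{BHR} (which relies on the real structure $\sigma$ available only when $X'=\bar X$) must be replaced. The one input still to be pinned down carefully is the rigidity statement that a $\mathbb{C}^*$-invariant holomorphic function on $\mathcal{M}_{\mathrm{Dol}}(X,r)$ is constant; I would establish it via the Bialynicki--Birula decomposition (Theorem~\ref{33}) together with properness of the Hitchin map, which guarantees the existence of the limits $\lim_{t\to0}t\cdot(-)$ and the density of the stratum flowing to a single connected fixed component.
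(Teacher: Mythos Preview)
Your route differs substantially from the paper's. The paper argues by contradiction, adapting \cite{BHR}: assuming some $\pi^{-1}(\lambda_0)$ is not sent to a fiber, it locates a point $u_1\in\sigma(\pi^{-1}(\lambda_0))\cap\pi^{-1}(\lambda_1)$ lying on a twistor line $s_{u_1}$ through a $\mathbb{C}$-VHS, and uses the weight-one property $N_{s_{u_1}}\simeq\mathcal{O}_{\mathbb{P}^1}(1)^{\oplus\kappa}$ of Theorem~\ref{weight-one} to produce a nearby section whose $\sigma^{-1}$-image meets $\pi^{-1}(\lambda_0)$ at least twice, contradicting that elements of $\Aut_0$ take sections to sections. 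You instead argue directly via the global unit $h=(\pi\circ\Phi)/\pi$ and the observation that de Rham sections are $\mathbb{C}^*$-orbit closures; this never touches the normal-bundle computation and is in that sense more elementary, while the paper's argument makes Theorem~\ref{weight-one} the decisive substitute for the real structure exploited in \cite{BHR}.

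There is, however, a concrete gap. Since $\mathrm{TW}(X,X';r)$ is glued from the \emph{stable} moduli $\mathcal{M}_{\mathrm{Hod}}$, your ``nilpotent cone'' $\mathcal{N}\subset\pi^{-1}(0)=\mathcal{M}_{\mathrm{Dol}}(X,r)$ is only the open stable part of the full nilpotent cone in $\mathbb{M}_{\mathrm{Dol}}(X,r)$, and for $r\geq2$ it is \emph{not} compact --- its closure contains, for instance, the points $(E,0)$ with $E$ strictly polystable --- so the argument that $\Phi(\mathcal{N})$ must land in a single fiber breaks down. The first step can be repaired by replacing $\mathcal{N}$ with a generic smooth Hitchin fiber $F$ (an abelian variety lying entirely in the stable locus): these are genuinely compact and connected, the same path argument gives $\Phi(F)\subset\pi^{-1}(0)$ for each such $F$, and density of their union yields $\Phi(\pi^{-1}(0))=\pi^{-1}(0)$. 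The same non-compactness issue recurs in your endgame, where you need $h$ constant on the $\mathbb{C}$-VHS locus; the cleanest fix there is to extend $h|_{\pi^{-1}(0)}$ across the strictly polystable locus by Hartogs (normality of $\mathbb{M}_{\mathrm{Dol}}(X,r)$ together with a codimension-$\geq 2$ estimate as in \cite{BGL}) and only then invoke compactness and connectedness of the full nilpotent cone.
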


\begin{proof}The approach we use here is a slight modification of that in \cite[Theorem 5.1]{BHR}.
Let $\sigma\in \Aut_0(\mathrm{TW}(X,X';r))$. Firstly, we show that $\sigma$ maps  fibers of $\pi:\mathrm{TW}(X,X';r)\rightarrow\mathbb{P}^1$ to  fibers. Otherwise, we assume $\sigma$ does not map the fiber $\pi^{-1}(\lambda_0)$ to a fiber for some $\lambda_0\in\mathbb{P}^1$. We can find $u_1=((E,\bar\partial_E),D^{\lambda_1})\in \sigma(\pi^{-1}(\lambda_0))\bigcap \pi^{-1}(\lambda_1)$ with $\lambda_1\in \mathbb{P}^1\{0,\lambda_0,\infty\}$ such that
\begin{itemize}
  \item there is a twistor line $s_{u_1}$ through $u_1$ with $s_{u_1}(0)$ being a $\mathbb{C}$-VHS,
  \item there is a small analytic open neighborhood $U\subset \mathbb{P}^1\backslash\{0,\lambda_0,\infty\}$ of $\lambda_1$ such $U\subset \pi(\sigma(\pi^{-1}(\lambda_0)))$,
      \item the tangent map $d\pi|_{u_1}: T_{u_1}\sigma(\pi^{-1}(\lambda_0))\rightarrow T_{\lambda_1}\mathbb{P}^1$ of the projection $\pi$ at $u_1$ is surjective,
          \item there exists  $(\alpha,\beta)\in T_{u_1}\sigma(\pi^{-1}(\lambda_0))$ with nonzero $\alpha\in\mathrm{\Omega}^{0,1}_X(\End(E))$ and $\beta\in\mathrm{\Omega}^{1,0}_X(\End(E))$ such that $p_v(\alpha,\beta)$ lies in  $N_{s_{u_1}}|_{\lambda_2}\simeq T_{u_1^{\prime}}\mathcal{M}_{\mathrm{dR}}(X,r)$ with some $\lambda_2\neq \lambda_1\in U$, where
$p_v: T_{u_1}\sigma(\pi^{-1}(\lambda_0))\rightarrow T_{u_1^{\prime}}\mathcal{M}_{\mathrm{dR}}(X,r)
$ is the projection under the isomorphism $\mathrm{TW}(X,X^\prime;r)|_{\mathbb{P}^1\backslash\{0,\infty\}}\simeq \mathcal{M}_{\mathrm{dR}}(X,r)\times\mathbb{C}^*$ for $u_1^\prime=((E,\bar\partial_E),\lambda_1^{-1}D^{\lambda_1})$.
\end{itemize}

By Theorem \ref{mmmmm}-(1), we can pick two points $(\alpha,\beta)$ lying in $ T_{u_1}\sigma(\pi^{-1}(\lambda_0)$ to determine a section $s_{u_1}^\prime$ of subbundle $L\subset N_{s_{u_1}}$ which is semi-stable and of positive degree such that   $s_{u_1}^\prime$ produces  a section $\widetilde{s_{u_1}^\prime}: \mathbb{P}^1\rightarrow\mathrm{TW}(X,X^\prime;r)$ satisfying that  $\sigma^{-1}(\widetilde{s_{u_1}^\prime})$ intersects the fiber $\pi^{-1}(\lambda_0)$ at least twice. This contradicts  the fact that the element of $\Aut_0(\mathrm{TW}(X,X^\prime;r))$ maps a section to another section.

As a consequence, the holomorphic automorphism $\sigma\in \Aut_0(\mathrm{TW}(X,X^\prime;r))$ induces a holomorphic automorphism $\tau_\sigma\in \Aut(\mathbb{P}^1)$, which has a form $\tau_\sigma(\lambda)=\frac{a\lambda+b}{c\lambda+d}$ with
$\left(
           \begin{array}{cc}
                 a & b\\
                 c & d \\
           \end{array}
       \right)\in \mathrm{GL}(2,\mathbb{C})$.
       Since $\sigma$ must map the fiber $\pi^{-1}(0)$ to itself or to the fiber $\pi^{-1}(\infty)$, we have
$$
\tau_\sigma(\lambda)=\left\{
    \begin{aligned}
    \varepsilon_\sigma\cdot \lambda, \qquad& \tau_\sigma(0)=0,\  \tau_\sigma(\infty)=\infty; \\
                     \frac{\lambda}{\varepsilon_\sigma },\qquad & \tau_\sigma(0)=\infty,\  \tau_\sigma(\infty)=0,
    \end{aligned}
                 \right.
$$
for some $\varepsilon_\sigma\in\mathbb{C}^*$. The theorem follows.
\end{proof}

 \end{document}